\newtheorem{theorem}{Theorem}[subsection]
\newtheorem{corollary}[theorem]{Corollary}
\newtheorem{lemma}[theorem]{Lemma}
\newtheorem{definition}[theorem]{Definition}
\newtheorem{remark}[theorem]{Remark}
\newtheorem{example}[theorem]{Example}
\newtheorem*{thma}{Theorem A}
\newtheorem*{thmb}{Theorem B}
\newtheorem*{thmc}{Theorem C}
\def\calC{{\mathcal C}}
\def\End{\mathop{\rm End}\nolimits}
\def\Hom{\mathop{\rm Hom}\nolimits}
\def\Id{\mathop{\rm Id}\nolimits}
\def\lim{\mathop{\varinjlim}\nolimits}
\def\Ob{\mathop{\rm Ob}\nolimits} 
\def\Mor{\mathop{\rm Mor}\nolimits}
\def\cod{\mathop{\rm cod}}
\def\dom{\mathop{\rm dom}}
\DeclareMathOperator{\Ab}{\rm Ab}
\DeclareMathOperator{\rMod}{Mod-}
\DeclareMathOperator{\Add}{\rm Add}
\DeclareMathOperator{\op}{\rm op}
\begin{document}

\title[Pseudoskew category algebras]{Pseudoskew category algebras and modules over representations of small categories}

\author{Mawei Wu}
\address{School of Mathematics and Statistics, Lingnan Normal University, Zhanjiang, Guangdong 524048, China}
\email{wumawei@lingnan.edu.cn}

\thanks{The author \begin{CJK*}{UTF8}{}
\CJKtilde \CJKfamily{gbsn}(吴马威)
\end{CJK*} is supported by the Scientific Research Innovation Project of Lingnan Normal University (No. LT2401) and Lingnan Normal University (No. LT2410)}

\subjclass[2020]{18A25, 18E10, 16D90, 18F20, 16S90}

\keywords{pseudoskew category algebra, functor category, pseudofunctor, torsion pair, representation, Grothendieck construction, Grothendick topology}



\begin{abstract}
Let $\calC$ be a small category and let $R$ be a representation of the category $\calC$, that is, a pseudofunctor from a small category to the category of small preadditive categories. In this paper, we mainly study the category $\rMod R$ of right modules over $R$. We characterize it both as a category of the Abelian group valued functors on $Gr(R)$ and as a category of modules over a new family of algebras: the pseudoskew category algebras $R[\calC]$, where $Gr(R)$ is the linear Grothendieck construction of $R$. Moreover, we also classify the hereditary torsion pairs in $\rMod R$ and reprove a result of Estrada and Virili.     
\end{abstract}

\maketitle

\tableofcontents

\section{Introduction}
Let $\calC$ be a small category and let $R: \calC \to \Add$ be \emph{a representation of the category} $\calC$ which is a pseudofunctor from a small category $\calC$ to the category of small preadditive categories $\Add$ (see Definition  \ref{rep} for its precise definition). Given a representation $R$ of the category $\calC$, one can consider \emph{the category $\rMod R$ of right modules over $R$} (see Definition \ref{rmod}). In 2017, Estrada and Virili showed that the category of right $R$-modules $\rMod R$ is a Grothendieck category, and if $\calC$ is a poset, it also has a projective generator (see \cite[Theorem 3.18]{EV17}). The study of the representation of the category follows the philosophy of Mitchell \cite{Mit72} of working with rings with several objects. With the same spirit, one can replace the preadditive categories $\Add$ here by other categories. For instance, one can take values in entwining structures over a semiperfect coalgebra \cite{Ban23}, or taking values in (co)algebras \cite{BBR24}. Recently, we consider replacing the preadditive categories by the differential graded categories \cite{Wu24a}. 

In this paper, we change slightly the definition of $R$-modules in \cite[Definition 3.6]{EV17} (see Remark \ref{change}), and trying to investigate the category $\rMod R$ further. We obtain two characterizations of it, both as a functor category and as a category of modules over an algebra. Given a representation $R$ of the category, one can define the linear Grothendieck construction $Gr(R)$ of it (see Definition \ref{lingrocon}). Our first result characterizes the category $\rMod R$ as a functor category, and reproves (actually our result is slightly more general) Estrada and Virili's result \cite[Theorem 3.18]{EV17} with different method.

\begin{thma} (Theorem \ref{howeplus} and Corollary \ref{Groproj})
Let $\calC$ be a small category and let $R: \calC \to \Add$ be a representation of the category $\calC$, then we have the following equivalence
 $$
 \rMod R \simeq (Gr(R)^{\op}, \Ab).
 $$ 
Consequently, the category of right $R$-modules $\rMod R$ is a Grothendieck category and has a projective generator.     
\end{thma}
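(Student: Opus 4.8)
The plan is to construct an explicit equivalence $\rMod R \simeq (Gr(R)^{\op},\Ab)$ by an un-currying (Grothendieck-construction) argument, and then to read off the homological conclusions from standard properties of functor categories over a small preadditive category. First I would make the bookkeeping of $Gr(R)$ completely explicit in the form of Definition~\ref{lingrocon}: an object is a pair $(c,x)$ with $c\in\Ob(\calC)$ and $x\in\Ob(R(c))$; a morphism $(c,x)\to(c',x')$ is a finite formal sum of pairs $(\alpha,f)$ with $\alpha\colon c\to c'$ in $\calC$ and $f$ a morphism of $R(c')$ between $R(\alpha)(x)$ and $x'$ (so the Hom-group is a direct sum over $\alpha\in\calC(c,c')$ of Hom-groups of $R(c')$); and composition is the bilinear extension of $(\beta,g)\circ(\alpha,f)=(\beta\alpha,\, g\circ R(\beta)(f)\circ\gamma)$ with $\gamma$ the appropriate structure isomorphism $R(\beta\alpha)\cong R(\beta)R(\alpha)$ of the pseudofunctor $R$. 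The pentagon and unit coherence of $R$ are exactly what make this composition associative and unital, so $Gr(R)$ is a genuine small preadditive category and $(Gr(R)^{\op},\Ab)$ is the category of additive functors $Gr(R)^{\op}\to\Ab$.

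Next I would define the comparison functor $\Phi\colon \rMod R \to (Gr(R)^{\op},\Ab)$. Unwinding Definition~\ref{rmod}, a right $R$-module $M$ amounts to a right $R(c)$-module $M_c\colon R(c)^{\op}\to\Ab$ for each $c\in\Ob(\calC)$ together with, for each $\alpha\colon c\to c'$, a morphism of right $R(c)$-modules relating $M_c$ with the restriction of $M_{c'}$ along $R(\alpha)$, subject to coherence with respect to composition in $\calC$ and to the structure isomorphisms of $R$. I would put $\Phi(M)(c,x)=M_c(x)$, let a generator $(\alpha,f)$ of $Gr(R)((c,x),(c',x'))$ act as the composite of $M_{c'}(f)$ with the transition morphism attached to $\alpha$, and extend additively. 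The module coherence axioms translate term by term into functoriality of $\Phi(M)$ on $Gr(R)^{\op}$; $\Phi(M)$ is additive by construction, and $\Phi$ is itself additive and functorial in $M$.

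Then I would exhibit a quasi-inverse $\Psi$ by currying: given $N\colon Gr(R)^{\op}\to\Ab$, let $N_c$ be the restriction of $N$ along the faithful embedding $R(c)^{\op}\hookrightarrow Gr(R)^{\op}$, $x\mapsto(c,x)$, and recover the transition morphisms from the action of the morphisms $(\alpha,\mathrm{id})$ in $Gr(R)$. The verification that $\Psi$ lands in $\rMod R$ and that $\Psi\Phi\cong\Id$, $\Phi\Psi\cong\Id$ is then a direct computation. This last step is the only real obstacle: it is not conceptually deep, but it requires careful tracking of the coherence isomorphisms of $R$ so that they cancel correctly on both sides, and it is precisely here that the slightly modified definition of $R$-module (cf.\ Remark~\ref{change}) is what makes the two sides match cleanly.

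Finally, the ``consequently'' clause follows from general theory. For any small preadditive category $\calA$, the functor category $(\calA,\Ab)$ of additive functors $\calA\to\Ab$ is a Grothendieck category, and $\bigoplus_{a\in\Ob(\calA)}\calA(-,a)$ is a projective generator by the additive Yoneda lemma. Taking $\calA=Gr(R)^{\op}$ and transporting these facts along the equivalence $\Phi$ shows that $\rMod R$ is a Grothendieck category possessing a projective generator. The point worth stressing is that this argument is uniform in $\calC$: the projective generator is produced with no restriction on $\calC$ whatsoever, which is how Theorem~A recovers and slightly strengthens \cite[Theorem~3.18]{EV17}, where a projective generator was obtained only under the hypothesis that $\calC$ be a poset.
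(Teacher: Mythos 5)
Your proposal is correct and follows essentially the same route as the paper: the paper's Constructions~1 and~2 are exactly your $\Phi$ (evaluate $M_c$ at $x$ and let a generator $(\alpha,f)$ act by $M_{c'}(f)$ followed by the transition map) and your $\Psi$ (restrict along $x\mapsto(c,x)$ and recover transitions from $(\alpha,\mathrm{id})$), and Corollary~\ref{Groproj} is obtained from the additive Yoneda lemma just as you describe. The only detail worth flagging is that the ``embedding'' $R(c)^{\op}\hookrightarrow Gr(R)^{\op}$ is not strict: since the $1_c$-component of $Gr(R)({}_cx,{}_cx')$ is $R(c)(R(1_c)x,x')$, a morphism $g$ of $R(c)$ must be sent to $g\circ\eta_c x$ placed in the $1_c$-slot, which is precisely the coherence bookkeeping you anticipated.
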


The first part of Theorem A above can be viewed as the pseudofunctor analogue of the Howe's result in trivial topology case (see \cite[Proposition 5]{How81} or Theorem \ref{How81}). With the help of Theorem A, we can classify all the hereditary torsion pairs in $\rMod R$ by the linear Grothendieck topologies on $Gr(R)$. For the definitions of hereditary torsion pairs and linear Grothendieck topologies, one can see \cite[Definition 2.3.1 \& 2.3.2 and Definition 2.1.15]{Wu24}.

\begin{thmb} (Corollary \ref{htp})   
Let $\calC$ be a small category and let $R: \calC \to \Add$ be a representation of the category $\calC$. Then there is an (explicit) one-to-one correspondence between linear Grothendieck topologies on $Gr(R)$ and hereditary torsion pairs in $\rMod R$. 
\end{thmb}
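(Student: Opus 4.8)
The plan is to derive Theorem B as a formal consequence of Theorem A together with the classical correspondence --- valid for the module category over any small preadditive category --- between linear Grothendieck topologies and hereditary torsion pairs. Write $\calA := Gr(R)$; by Definition \ref{lingrocon} this is a small preadditive category, and the first part of Theorem A (Theorem \ref{howeplus}) furnishes an equivalence of abelian categories $\Phi \colon \rMod R \xrightarrow{\sim} (\calA^{\op},\Ab)$. An equivalence of abelian categories carries torsion pairs to torsion pairs and preserves the hereditary ones, since the torsion class being closed under quotients, extensions, coproducts and subobjects are conditions on the abelian structure alone; hence $(\calT,\calF)\mapsto(\Phi\calT,\Phi\calF)$ is a bijection between hereditary torsion pairs in $\rMod R$ and those in $(\calA^{\op},\Ab)$. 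It therefore suffices to set up the explicit bijection on the functor-category side, between linear Grothendieck topologies on $\calA$ and hereditary torsion pairs in $(\calA^{\op},\Ab)$.

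For the latter one either cites, or reproves along the following lines, the standard dictionary (see \cite{Wu24} for the governing definitions). To a linear Grothendieck topology $J$ on $\calA$ one attaches the hereditary torsion pair $(\calT_J,\calF_J)$ in which $\calT_J$ is the smallest localizing subcategory of $(\calA^{\op},\Ab)$ containing all quotients $\Hom_\calA(-,a)/I$ with $a\in\calA$ and $I\subseteq\Hom_\calA(-,a)$ a $J$-covering subfunctor; equivalently, $\calT_J$ consists of the $\calA$-modules $M$ all of whose ``annihilator sieves'' $\{f\in\Hom_\calA(b,a): M(f)(m)=0\}\subseteq\Hom_\calA(-,a)$ (for $a,b\in\calA$, $m\in M(a)$) are $J$-covering. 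Conversely, to a hereditary torsion pair $(\calT,\calF)$ one attaches the assignment $J(\calT)$ sending $a\in\calA$ to the set of subfunctors $I\subseteq\Hom_\calA(-,a)$ with $\Hom_\calA(-,a)/I\in\calT$: stability under pullback is immediate from closure of $\calT$ under subobjects (for $g\in\Hom_\calA(b,a)$ the induced map $\Hom_\calA(-,b)/g^{*}I\to\Hom_\calA(-,a)/I$ is a monomorphism), and the transitivity (local character) axiom from closure of $\calT$ under extensions and coproducts. The two round trips $J(\calT_J)=J$ and $(\calT_{J(\calT)},\calF_{J(\calT)})=(\calT,\calF)$ then follow from the annihilator-sieve description (which identifies $I$ with the annihilator sieve of the canonical section of $\Hom_\calA(-,a)/I$) and from the fact that a hereditary torsion class in a Grothendieck functor category is generated, as a localizing subcategory, by the quotients of representables it contains --- every $\calA$-module being an epimorphic image of a coproduct of such quotients. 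Composing this bijection with the transport along $\Phi$ of the first paragraph proves Theorem B, and spelling out $\Phi$ --- the equivalence of Theorem A in terms of the pseudofunctor $R$ --- renders the correspondence explicit.

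The only genuine work is the functor-category dictionary, and its delicate point is the interplay between the transitivity axiom of a linear Grothendieck topology and closure of a torsion class under extensions. Concretely: to see that $J(\calT)$ satisfies transitivity one must show that if $I\subseteq\Hom_\calA(-,a)$ is $J(\calT)$-covering and $K\subseteq I$ is a subfunctor whose pullback along every section of $I$ is $J(\calT)$-covering, then $\Hom_\calA(-,a)/K\in\calT$; this is done via the short exact sequence $0\to I/K\to\Hom_\calA(-,a)/K\to\Hom_\calA(-,a)/I\to 0$, noting $\Hom_\calA(-,a)/I\in\calT$ and identifying $I/K$ as an epimorphic image of a coproduct --- small, by smallness of $\calA$ and Yoneda --- of torsion modules $\Hom_\calA(-,b)/K_b$, then invoking closure of $\calT$ under coproducts, quotients and extensions. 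Dually, the hardest step in $J(\calT_J)=J$ is that the annihilator-sieve class $\calT_J$ is closed under extensions, which is again exactly where the transitivity of $J$ enters. Everything else --- smallness and preadditivity of $Gr(R)$, exactness of $\Phi$, preservation of the hereditary torsion-pair structure under an equivalence, and the remaining round-trip verifications --- is routine. If \cite{Wu24} already records this dictionary at the stated level of generality (it supplies the relevant definitions in Definition 2.3.1, 2.3.2 and Definition 2.1.15), the proof collapses to: apply that correspondence to $\calA=Gr(R)$ and transport along Theorem A.
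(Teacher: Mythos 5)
Your proposal is correct and follows essentially the same route as the paper: the paper's proof of Corollary \ref{htp} is precisely to transport the problem along the equivalence $\rMod R \simeq (Gr(R)^{\op},\Ab)$ of Theorem \ref{howeplus} and then invoke the known bijection (cited as \cite[Theorem 3.7]{PSV21}) between linear Grothendieck topologies on a small preadditive category and hereditary torsion pairs in its functor category. Your additional sketch of how that dictionary is proved (annihilator sieves, transitivity versus closure under extensions) is a correct expansion of the cited black box, but it is not a different strategy.
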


Given a representation $R$ of a small category $\calC$, we introduce a new family of algebras associated to it, so-called the \emph{pseudoskew category algebras} (see Definition \ref{pseudoskewcatealg}). The pseudoskew category algebras include skew category algebras (see \cite[Definition 3.2.1]{WX23}) as special cases. Our second characterization of $\rMod R$ is as follows, which says that $\rMod R$ is equivalent to a category of modules over a pseudoskew category algebra $R[\calC]$. The following Theorem can be seen as a higher analogue of the \cite[Theorem A]{WX23}.

\begin{thmc} (Theorem \ref{higherskew})
Let $\calC$ be a small category and let $R: \calC \to \Add$ be a representation of the category $\calC$. If $\Ob \calC < +\infty$ and $\Ob R(i) < +\infty$ for all $i \in \Ob \calC$, then we have the following equivalence
 $$
 \rMod R \simeq \rMod R[\calC].
 $$    
\end{thmc}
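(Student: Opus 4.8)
The plan is to build on Theorem A, which already identifies $\rMod R \simeq (Gr(R)^{\op}, \Ab)$, and then invoke the classical correspondence between functor categories on a preadditive category with finitely many objects and module categories over the associated "matrix-type" algebra. Concretely, when a preadditive category $\calD$ has finitely many objects, one forms the algebra $A = \bigoplus_{i,j \in \Ob \calD} \Hom_{\calD}(j, i)$ with multiplication given by composition (zero whenever the morphisms are not composable), and one has a well-known equivalence $(\calD^{\op}, \Ab) \simeq \rMod A$; the idempotents $e_i = \Id_i$ give the object-wise decomposition, so that a functor $F$ corresponds to the module $\bigoplus_i F(i)$. So the first step is to apply this to $\calD = Gr(R)$, which requires exactly that $\Ob Gr(R) < +\infty$. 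Since objects of the linear Grothendieck construction are pairs $(i, x)$ with $i \in \Ob \calC$ and $x \in \Ob R(i)$, the hypotheses $\Ob \calC < +\infty$ and $\Ob R(i) < +\infty$ for all $i$ guarantee $\Ob Gr(R) < +\infty$.

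The second step is to identify the resulting algebra $\bigoplus_{(i,x),(j,y)} \Hom_{Gr(R)}((j,y),(i,x))$ with the pseudoskew category algebra $R[\calC]$ of Definition \ref{pseudoskewcatealg}. This should be essentially a matter of unwinding definitions: the hom-sets in the linear Grothendieck construction are built from morphisms $\alpha$ in $\calC$ together with morphisms in the target categories $R(i)$ of the appropriate type, and the composition law in $Gr(R)$ is precisely the one that encodes the pseudofunctor structure (the coherence isomorphisms of $R$), which is exactly what is built into the multiplication of $R[\calC]$. I would verify that the bijection of underlying groups is compatible with composition/multiplication and sends $\Id_{(i,x)}$ to the corresponding primitive idempotent of $R[\calC]$, so that it is an isomorphism of algebras. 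Composing the two equivalences then yields
$$
\rMod R \;\simeq\; (Gr(R)^{\op}, \Ab) \;\simeq\; \rMod R[\calC],
$$
as desired.

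The main obstacle I anticipate is bookkeeping rather than conceptual depth: one must be careful that the algebra $R[\calC]$ as defined makes sense (is associative and unital) without a countability/finiteness assumption, and that the finiteness hypotheses are used only to pass from the functor category to a genuine (unital) module category — since $\bigoplus_{(i,x)} \Id_{(i,x)}$ is a two-sided identity of $R[\calC]$ precisely when the index set is finite. A secondary technical point is keeping track of left versus right module conventions so that $\rMod R[\calC]$ (right modules, as the notation $\Mod\text{-}$ indicates) matches $(Gr(R)^{\op}, \Ab)$ rather than $(Gr(R), \Ab)$; this is just a matter of taking opposite categories consistently. I do not expect the pseudofunctor coherence data to cause real trouble here, because all of it has already been absorbed into the construction of $Gr(R)$ in Definition \ref{lingrocon} and, correspondingly, into the multiplication of $R[\calC]$ in Definition \ref{pseudoskewcatealg}; the equivalence $(Gr(R)^{\op},\Ab)\simeq \rMod R[\calC]$ is then the standard one and does not "see" the pseudofunctor structure directly.
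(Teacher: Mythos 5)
Your proposal is correct and follows essentially the same route as the paper: the paper also composes Theorem \ref{howeplus} with the classical identification of $(Gr(R)^{\op},\Ab)$ with modules over the endomorphism ring of the small projective generator $G=\bigoplus_{_ix}\Hom_{Gr(R)}(-,{}_ix)$, and then computes $\End(G)\cong\bigoplus_{_ix,_jy}\Hom_{Gr(R)}({}_ix,{}_jy)\cong R[\calC]$ via Yoneda, which is exactly your ``matrix-type algebra'' step. The finiteness hypotheses are used in the paper just as you anticipate, to ensure $\Ob Gr(R)<+\infty$ so that $G$ is small and $R[\calC]$ is unital.
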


This paper is organized as follows. In Section \ref{prelim}, the definitions of the representation of a small category and its right modules, as well as the linear Grothendieck constructions are recalled. Then, a new notion, so-called the pseudoskew category algebra, is introduced. In Section \ref{cha}, two characterizations of categories of modules $\rMod R$ over representations of a small category $\calC$ are given, both as an Abelian group valued functor category and as a category of modules over a pseudoskew category algebra. Besides, the hereditary torsion pairs on $\rMod R$ are classified and a result (\cite[Theorem 3.18]{EV17}) of Estrada and Virili is reproved in this Section.

\section{Preliminaries} \label{prelim}
In this section, we will recall the definitions of the representation of a small category and its right modules, as well as the definition of the linear Grothendieck constructions. And then, a new concept, so-called the pseudoskew category algebra, will be introduced.

\subsection{Modules over representations of small categories}

In this subsection, the definitions of a representation of a small category and its right modules will be recorded.

\begin{definition}(\cite[Definition 3.1]{EV17}) \label{rep}
Let $\calC$ be a small category, a \emph{representation} of $\calC$ is a pseudofunctor $R: \calC \to \Add$, that is, $R$ consists of the following data:

\begin{enumerate}
    \item for each object $i \in \Ob \calC$, a preadditive category $R(i)$;
    \item for all $i, j \in \Ob \calC$ and any morphism $a: i \to j$, an additive functor $R(a): R(i) \to R(j)$;
    \item for each object $i \in \Ob \calC$, an isomorphism of functors $\delta_i: 1_{R(i)} \overset{\sim}{\longrightarrow} R(1_i)$;
    \item for any pair of composable morphisms $a$ and $b$ in $\calC$, an isomorphism of functors $\mu_{b,a}: R(b)R(a) \overset{\sim}{\longrightarrow} R(ba)$.    
\end{enumerate}
Furthermore, we suppose that the following axioms hold:

\begin{enumerate}
    \item[(Rep.1)] given three composable morphisms $i \overset{a}{\longrightarrow} j \overset{b}{\longrightarrow} k \overset{c}{\longrightarrow} h$ in $\calC$, the following diagram commutes
     $$
     \xymatrix @C=5pc {
      R(c)R(b)R(a) \ar[rr]^{R(c)\mu_{b,a}} \ar[d]_{\mu_{c,b} R(a)} & & R(c)R(ba) \ar[d]^{\mu_{c,ba}} \\
      R(cb)R(a) \ar[rr]^{\mu_{cb,a}}  & & R(cba), \\
     } 
     $$ 
    \item[(Rep.2)] given a morphism $(a: i \to j) \in \Mor \calC$, the following diagram commutes
     $$
     \xymatrix{
        & R(a) \ar[dl]_{R(a)\delta_i} \ar[dr]^{\delta_jR(a)} \ar@{=}[dd] & \\
   R(a)R(1_i) \ar[dr]_{\mu_{a,1_i}}     &  &  R(1_j)R(a) \ar[dl]^{\mu_{1_j,a}}  \\
      & R(a). &  \\
     } 
     $$ 
\end{enumerate}
\end{definition}

\begin{remark} \label{diff}
\begin{enumerate}
    \item One should note that the 2-isomorphisms $\delta$ and $\mu$ above are different  from the 2-isomorphisms (when restricting a colax functor to a pseudofunctor) $\eta$ and $\theta$ in \cite[Definition 2.1]{Asa13b}. In fact, they are in different directions, 
    $$
     \xymatrix @C=5pc {
     R(1_i) \ar@<.6ex>[r]^{\eta_i}  &  1_{R(i)} \ar@<.6ex>[l]^{\delta_i},  \\
     R(ba)  \ar@<.6ex>[r]^{\theta_{b,a}} &  R(b)R(a) \ar@<.6ex>[l]^{\mu_{b,a}}.   \\
     } 
     $$
     And they are \emph{inverse} to each other, namely, $\delta_i=(\eta_i)^{-1}$ and $\mu_{b,a}=(\theta_{b,a})^{-1}$.  
     \item A representation $R: \calC \to \Add$ is said to be \emph{strict} if it is a functor, that is, $R(1_i)=1_{R(i)}$, $R(ba)=R(b)R(a)$, and $\eta$  and $\mu$ are identities, so do $\delta$ and $\mu$. 
     \item Given a representation $R: \calC \to \Add$ and a morphism $(a: i \to j) \in \Mor \calC$, we denote by 
     $$
     \xymatrix @C=5pc { 
     a_{!}: (R(i)^{\op}, \Ab) \ar@<.6ex>[r] & (R(j)^{\op}, \Ab) :a^{*} \ar@<.6ex>[l] \\
     }
     $$
the change of base adjunction $(a_{!}, a^{*})$ induced by $R(a)$.
\end{enumerate}  
\end{remark}

Given a representation of a small category, one can consider its right modules.

\begin{definition}(\cite[Definition 3.6 with slight modifications]{EV17}) \label{rmod}
Let $R: \calC \to \Add$ be a representation of the small category $\calC$. A \emph{right $R$-module} $M$ consists of the following data:

\begin{enumerate}
    \item for all $i \in \Ob \calC$, a right $R(i)$-module $M_i: R(i)^{\op} \to \Ab$;
    \item for any morphism $a: i \to j$ in $\calC$, a homomorphism $M(a): a^*M_j \to M_i$. 
\end{enumerate}
Furthermore, we suppose that the following axioms hold:

\begin{enumerate}
    \item[(Mod.1)] given two morphisms $a: i \to j$ and $b: j \to k$ in $\calC$, the following diagram commutes:
     $$
     \xymatrix @C=5pc {
      a^*b^*M_k \ar[d]_{\theta_{b,a}1_{M_k}} \ar[r]^{a^*M(b)} & a^*M_j \ar[r]^{M(a)} & M_i \\
     (ba)^*M_k \ar[urr]_{M(ba)} & & \\
     } 
     $$ 
    \item[(Mod.2)] for all $i \in \Ob \calC$, the following diagram commutes:
    $$
    \xymatrix @C=5pc {
      (1_i)^*M_i \ar[r]^-{M(1_i)}  & M_i \\
      M_i \ar[u]^{\eta_i 1_{M_i}} \ar[ur]_{1_{M_i}}  & \\
     } 
    $$
\end{enumerate}
\end{definition}

For a representation $R: \calC \to \Add$ of the small category $\calC$, we will denote \emph{the category of all right $R$-modules} by $\rMod R$, which is the main object studied in this paper.

\begin{remark} \label{change}
The condition (2) of the Definition above is the main difference between ours with \cite[Definition 3.6]{EV17}. We change the direction in order to adapt to the direction chosen in Definition \ref{lingrocon}.    
\end{remark}

\subsection{Linear Grothendieck constructions and pseudoskew category algebras}

In this subsection, we will first recall the definition of \emph{linear} Grothendieck constructions, and then we will introduce a new notion, so-called pseudoskew category algebras. 

\subsubsection{linear Grothendieck constructions}

Given an oplax functor, one can define the linear Grothendieck construction of it, see \cite[Definition 4.1]{Asa13b} for its explicit definition. We can restrict this construction to a pseudofunctor. More specifically, we will apply the linear Grothendieck construction to a representation $R:\calC \to \Add$ of a small category $\calC$. Hence we have the following definition. We already knew that the 2-isomorphisms of a pseudofunctor in \cite[Definition 3.1]{EV17} and \cite[Definition 2.1]{Asa13b} are different, but they are inverse to each other, see Remark \ref{diff} (1).

\begin{definition} \label{lingrocon}
 Let $\calC$ be a small category and let $R:\calC \to \Add$ be a representation of $\calC$. Then a category $Gr(R)$, called the \emph{linear Grothendieck construction} of $R$, is defined as follows:
 \begin{enumerate}
     \item $\Ob Gr(R):=\cup_{i \in \Ob \calC}\{i\} \times \Ob R(i)=\{~_ix:=(i,x) ~|~ i \in \Ob \calC, x \in \Ob R(i)\}$;
     \item for each $_ix, _jy \in \Ob Gr(R)$, we set
     $$
     Gr(R)(_ix, _jy):=\bigoplus_{a \in \calC(i,j)}R(j)(R(a)x,y);
     $$
     \item for each $_ix, _jy, _kz \in \Ob Gr(R)$ and each $f=(f_a)_{a \in \calC(i,j)}\in Gr(R)(_ix, _jy)$, $g=(g_b)_{b \in \calC(j,k)}\in Gr(R)(_jy, _kz)$, we set
     $$
     g \circ f:=\left( \sum_{\substack{a \in \calC(i, j)\\ b \in \calC(j, k)\\ c=ba}} g_b \circ R(b)f_a \circ \theta_{b,a}x \right)_{c \in \calC(i, k)}
     $$
     where each summand is the composite of 
     $$
    \xymatrix @C=3pc {
    R(ba)x \ar[r]^-{\theta_{b,a}x} & R(b)R(a)x \ar[r]^-{R(b)f_a} &  R(b)y \ar[r]^-{g_b} & z;\\
     } 
    $$
     \item for each $_ix \in \Ob Gr(R)$ the identity $1_{_ix}$ is given by
     $$
     1_{_ix}:=(\delta_{a,1_i} \eta_ix)_{a \in \calC(i, i)} \in \bigoplus_{a \in \calC(i,i)}R(i)(R(a)x,x),
     $$
     where $\eta_ix: R(1_i)x \to 1_{R(i)}x=x$, and $\delta_{a,1_i}$ is the Kronecker delta (\emph{not} a 2-isomorphism $\delta_i$ in a pseudofunctor!), that is, the $a$-th component of $1_{_ix}$ is $\eta_ix$ if $a=1_i$, and $0$ otherwise. 
 \end{enumerate}
\end{definition}

\subsubsection{Pseudoskew category algebras}

In this subsection, we will introduce a new notion: pseudoskew category algebras, which will be use to characterize the categories of modules over representations of a small category in Section \ref{chaalg}. 

\begin{definition}  \label{pseudoskewcatealg}
Let $\calC$ be a (non-empty) small category and let $R: \calC \to \Add$ be a representation of $\calC$. The \emph{pseudoskew category algebra} $R[\calC]$ on $\calC$ with respect to $R$ is a $\mathbb{Z}$-module spanned over elements of  
$$
\{f_a ~|~ a \in \calC(i,j), f: R(a)(x) \to y, x \in \Ob R(i), y \in \Ob R(j) \}.
$$
We define the multiplication on two base elements by the rule
	\begin{eqnarray}
		g_b \ast f_a =
		\begin{cases}
		  (g \circ R(b)f \circ \theta_{b,a}x)_{ba},        & \text{if}\ \dom(b)=\cod(a); \notag \\
			0, & {\rm otherwise}.
		\end{cases}
	\end{eqnarray} 
where the map $g \circ R(b)f \circ \theta_{b,a}x$ can be depicted as follows
    $$
    \xymatrix @C=3pc {
    R(ba)x \ar[r]^-{\theta_{b,a}x} & R(b)R(a)x \ar[r]^-{R(b)f} &  R(b)y \ar[r]^-{g} & z,\\
     } 
    $$
and $\theta_{b,a}: R(ba) \overset{\sim}{\longrightarrow} R(b)R(a)$ is a 2-isomorphism of $R$. Extending this product linearly to two arbitrary elements, $R[\calC]$ becomes an associative algebra.
\end{definition}

\begin{remark}
\begin{enumerate}
    \item If $\Ob \calC < +\infty$ and $\Ob R(i) < +\infty$ for all $i \in \Ob \calC$, then the pseudoskew category algebra has an identity 
    $$
    \sum_{_ix \in \Ob Gr(R)}1_{_ix}=\sum_{_ix \in \Ob Gr(R)}(\cdots,0,\eta_ix,0,\cdots),
    $$
    where $\eta_ix: R(1_i)x \overset{\sim}{\longrightarrow} 1_{R(i)}x$ is the $1_i$-th component of $(\cdots,0,\eta_ix,0,\cdots) \in Gr(R)(_ix,_ix)=\bigoplus_{a \in \calC(i,i)}R(i)(R(a)x,x)$;
    \item When $R: \calC \to {\rm Ring}$ is a precosheaf of rings, thus $R$ is a functor, which can be viewed as a pseudofuntor with 2-morphisms are identities (see Remark \ref{diff}), then $R[\calC]$ is just a (covariant version) skew category algebra (see \cite[Definition 3.2.1]{WX23}); 
    \item When $R$ is a lax functor (\emph{not} just a pseudofunctor), then one can define a more general notion, so-called the \emph{lax skew category algebra}.
\end{enumerate}   
\end{remark}

\section{Two characterizations of categories of modules over representations of a small category} \label{cha}

In this section, two characterizations of categories of modules $\rMod R$ over representations of a small category will be given. More specifically, we will characterize the category $\rMod R$ both as an Abelian group valued functor category and as a category of modules over a pseudoskew category algebra. 

\subsection{Characterizing it as functor categories}
Let's first consider the following two constructions, they will be use to prove the main results later.

\noindent\textbf{Construction 1}: Given $M \in \rMod R$, let's define $F_M$ (we will show that $F_M \in (Gr(R)^{\op}, \Ab)$ in Lemma \ref{F}) as follows:
$$
\xymatrix{
    Gr(R)^{\op} \ar[rr]^{F_M} &  & \Ab &  \\
   _ix  \ar@{}[u]|{\begin{sideways}$\in$\end{sideways}} \ar[dd]_{(\cdots,0,f_a,0,\cdots)} & & M_i(x) \ar@{}[u]|{\begin{turn}{90}$\in$\end{turn}} & \\
     & \longmapsto & & M_j(R(a)x) \ar[ul]_{M(a)_x} \\
   _jy  & & M_j(y) \ar[uu]|{M(a)_x \circ M_j(f)} \ar[ur]_{M_j(f)} & \\
     } 
$$
For a general morphism $(f_a)_a \in \Hom_{Gr(R)}(_ix,_jy)$, we define
$$
F_M[(f_a)_a]:=\sum_{a,f}M(a)_x \circ M_j(f).
$$

\noindent\textbf{Construction 2}: Given $F \in (Gr(R)^{\op}, \Ab)$, let's define $M_F$ (we will show that $M_F \in \rMod R$ in Lemma \ref{M}) as follows:
$$
\xymatrix{
   i \in \Ob \calC & \overset{M_F}{\longmapsto} & (R(i)^{\op} \ar[rr]^{M_i} & & \Ab) \\
   &  & x \ar@{}[u]|{\begin{sideways}$\in$\end{sideways}} \ar[dd]_{g} & & F(_ix) \ar@{}[u]|{\begin{sideways}$\in$\end{sideways}} \\
   & & & \longmapsto  & \\
   & & x' & & F(_ix') \ar[uu]_{F[(\cdots,0,g \circ \eta_ix,0,\cdots)]} \\
     } 
$$
where $g \circ \eta_ix$ is the composite of 
$$
R(1_i)x \overset{\eta_ix}{\longrightarrow} 1_{R(i)}x=x \overset{g}{\longrightarrow} x'.
$$
and $(\cdots,0,g \circ \eta_ix,0,\cdots)$ is a morphism of $\Hom_{Gr(R)}(_ix,_ix')$.
Recall that $a^*: (R(j)^{\op}, \Ab) \to (R(i)^{\op}, \Ab)$ is the restriction functor along $R(a): R(i) \to R(j)$, we define $M(a): a^*M_j \to M_i$ as follows:
$$
\xymatrix @C=8pc {
   a^*M_j(x) \ar[r]^-{M(a)_x} \ar @{=} [d] & M_i(x) \ar @{=} [dd] \\
   M_j(R(a)x) \ar @{=} [d] &  \\
  F(_jR(a)x) \ar[r]^-{F[(\cdots,0,1_{R(a)x},0,\cdots)]}  & F(_ix) \\
     } 
$$
where $(\cdots,0,1_{R(a)x},0,\cdots)$ is a morphism in $\Hom_{Gr(R)}(_ix, _jR(a)x)$. Thus we define $M(a)_x:=F[(\cdots,0,1_{R(a)x},0,\cdots)]$, where $1_{R(a)x}$ is in the $a$-th component.

\begin{lemma} \label{F}
If $M \in \rMod R$, then $F_M \in (Gr(R)^{\op}, \Ab)$, where $F_M$ is defined in Construction 1.    
\end{lemma}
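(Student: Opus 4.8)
The plan is to verify directly that the assignment $F_M$ described in Construction 1 is a well-defined contravariant functor from $Gr(R)$ to $\Ab$, i.e.\ an object of $(Gr(R)^{\op}, \Ab)$. Concretely, I must check three things: that $F_M$ is additive on each hom-group $Gr(R)(_ix,_jy)$, that it sends identities to identities, and that it is compatible with composition (contravariantly). Since $F_M$ on objects is just $_ix \mapsto M_i(x)$, and on a basis element $f_a$ it is the composite $M_j(y)\xrightarrow{M_j(f)}M_j(R(a)x)\xrightarrow{M(a)_x}M_i(x)$, additivity in $f$ for fixed $a$ is immediate from additivity of the functor $M_j$ and of the change-of-base map $M(a)$, and additivity across the direct sum $\bigoplus_{a\in\calC(i,j)}$ is built into the formula $F_M[(f_a)_a]=\sum_{a,f}M(a)_x\circ M_j(f)$; so the additivity check is routine.

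The substantive part is the two coherence checks. For identities, I would take $1_{_ix}=(\delta_{a,1_i}\eta_i x)_a$ from Definition \ref{lingrocon}(4): only the $a=1_i$ component is nonzero, so $F_M[1_{_ix}] = M(1_i)_x\circ M_i(\eta_i x)$. I need this to equal $1_{M_i(x)}$. This is exactly what axiom (Mod.2) gives: the triangle there says $M(1_i)\circ(\eta_i 1_{M_i}) = 1_{M_i}$, and evaluating at $x$ (and using that $(1_i)^*M_i(x)=M_i(R(1_i)x)$, with $M_i(\eta_i x)$ realizing the $\eta_i 1_{M_i}$ component) yields $F_M[1_{_ix}]=1_{M_i(x)}$.

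For composition, I would take basis elements $f_a\in Gr(R)(_ix,_jy)$ and $g_b\in Gr(R)(_jy,_kz)$ and compare $F_M[g_b]\circ F_M[f_a]$ (note the contravariant order: $F_M$ of the composite $g\circ f$ in $Gr(R)$ should be $F_M[f]\circ F_M[g]$) with $F_M$ applied to the single basis element $(g\circ R(b)f\circ\theta_{b,a}x)_{ba}$ coming from Definition \ref{lingrocon}(3). Unwinding, on one side I get the composite $M_k(z)\xrightarrow{M_k(g)}M_k(R(b)y)\xrightarrow{M(b)_y}M_j(y)\xrightarrow{M_j(f)}M_j(R(a)x)\xrightarrow{M(a)_x}M_i(x)$, and on the other side the composite $M_k(z)\xrightarrow{M_k(\theta_{b,a}x\,\circ\,R(b)f\,\circ\,g)}M_k(R(ba)x)\xrightarrow{M(ba)_x}M_i(x)$. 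The reconciliation uses functoriality of $M_k$ to split $M_k(g\circ R(b)f\circ\theta_{b,a}x)$ into three pieces, the naturality square for the change-of-base homomorphism $M(b):b^*M_k\to M_j$ applied to the morphism $R(b)f\colon R(b)R(a)x\to R(b)y$ in $R(j)$ (so that $M(b)_y\circ M_k(R(b)f)$ turns into $M_j(f)\circ M(b)_{R(a)x}$ after identifying $b^*M_k(R(a)x)=M_k(R(b)R(a)x)$), and finally axiom (Mod.1) evaluated at $x$, which states precisely that $M(a)\circ a^*M(b) = M(ba)\circ(\theta_{b,a}1_{M_k})$; this last identity absorbs the remaining $M(b)_{R(a)x}$ and $M_k(\theta_{b,a}x)$ into $M(ba)_x$.

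I expect the composition check to be the main obstacle, not because any single step is deep, but because of bookkeeping: one must carefully track the identifications $a^*M_j(x)=M_j(R(a)x)$ and $b^*M_k=$ restriction along $R(b)$, keep the $\theta_{b,a}$ versus $\mu_{b,a}=\theta_{b,a}^{-1}$ directions straight (Remark \ref{diff}(1)), and apply naturality of $M(b)$ and $M(a)$ in the right variable. Once the claim is verified on basis elements, extending to arbitrary morphisms of $Gr(R)$ is forced by $\mathbb Z$-bilinearity of composition in $Gr(R)$ together with the additivity of $F_M$ already established, so no further work is needed there.
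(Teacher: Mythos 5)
Your proposal is correct and follows essentially the same route as the paper: identity preservation via (Mod.2) applied to the single nonzero component $\eta_i x$ of $1_{_ix}$, and compatibility with composition on single-component morphisms via functoriality of $M_k$, the naturality of the structure map $M(b)\colon b^*M_k\to M_j$ (the paper's equality $(\ddagger)$, which it derives by a slightly more roundabout diagram chase), and axiom (Mod.1) evaluated at $x$ (the paper's $(\dagger)$), with the extension to general morphisms by additivity exactly as you say. The only blemishes are notational (e.g.\ writing $M_k(\theta_{b,a}x\circ R(b)f\circ g)$ for $M_k(g\circ R(b)f\circ\theta_{b,a}x)$, and a composite $M(b)_y\circ M_k(R(b)f)$ that does not typecheck as written), but the intended identities are the right ones.
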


\begin{proof}
We will prove that $F_M$ is a contravariant functor. Since 
\begin{align*}
     & F_M(1_{_ix}) \\
	   = & F_M[(\cdots,0,\eta_ix,0,\cdots)] \\
    def.~of~F_M = & M(1_i)_x \circ M_i(\eta_ix) \\
    (Mod.2)~of~def. \ref{rmod}= & 1_{M_i(x)} \\
    def.~of~F_M = & 1_{F_M(_ix)},\\ 	  
\end{align*}
hence $F_M$ preserves the identity: $F_M(1_{_ix})=1_{F_M(_ix)}$. For any two composable morphisms in $Gr(R)$:
$$
\xymatrix @C=4pc {
 _ix \ar[r]^-{f=(f_a)_a} & _jy \ar[r]^-{g=(g_b)_b} & _kz, \\
     } 
$$
we have to check that $F_M(gf)=F_M(f)F_M(g)$. Let $h:=g_b \circ R(b)f_a \circ \theta_{b,a}x$ and $c:=ba$, we have
\begin{align*}
     & F_M(gf) \\
	   = & F_M((g_b)_b \circ (f_a)_a) \\
	   = & F_M\left[\left(\sum_{c=ba}g_b \circ R(b)f_a \circ \theta_{b,a}x\right)_c\right] \\ 
        def.~of~F_M  = & \sum_{c,h} M(c)_x \circ M_k(\sum_{c=ba}h) \\
        M_k~is~an~add.~functor=& \sum_{c,h} M(c)_x \circ M_k(h) \\
          \overset{(\dagger)}{=} & \sum_{a,b,f,g}M(a)_x \circ M(b)_{R(a)x} \circ M_k[R(b)(f_a)] \circ M_k(g_b) \\
          = & \sum_{a,b,f,g}M(a)_x \circ (M(b)_{R(a)x} \circ M_k[R(b)(f_a)]) \circ M_k(g_b)\\
          \overset{(\ddagger)}{=} & \sum_{a,b,f,g}M(a)_x \circ (M_j(f_a) \circ M(b)_y) \circ M_k(g_b)\\
          = & \left(\sum_{a,f}M(a)_x \circ M_j(f_a)\right) \circ \left(\sum_{b,g}M(b)_y \circ M_k(g_b)\right)\\
          = & F_M[(f_a)_a] \circ F_M[(g_b)_b] \\
          = & F_M(f)F_M(g). \\
\end{align*}

The equality $(\dagger)$ holds because the following diagram commutes (The left square commutes as $M_k$ is a functor, and the right square commutes because of the condition (Mod.1) of Definition \ref{rmod}):
$$
\xymatrix @C=5pc {
M_k(R(b)y) \ar[r]^-{M_k(R(b)f_a)} & M_k(R(b)R(a)x) \ar @{=} [d] \ar[r]^-{M(b)_{R(a)x}}  &  M_j(R(a)x) \ar[ddd]^{M(a)_x} \\
                &    (a^*b^*M_k)(x)  \ar[d]^{M_k(\theta_{b,a}x)}_{\simeq} &\\ 
                &        (ba)^*M_k(x)  \ar @{=} [d]  &              \\
M_k(z) \ar[uuu]^{M_k(g_b)} \ar[r]^{M_k(h)}  & M_k(R(ba)x) \ar[r]^{M(c)_x} &
M_i(x).  \\
     } 
$$

The equality $(\ddagger)$ holds since one can show 
$$
M_j(f_a) \circ M(b)_y=M(b)_{R(a)x} \circ M_k[R(b)(f_a)].
$$
This is the case because we have the following commuting diagram:
$$
\xymatrix @C=4.4pc {  
M_i(x)   &    & &  \\
          &        M_j(R(a)x) \ar[ul]|{M(a)_x} & & \\
M_j(y) \ar[uu]|{M(a)_x \circ M_j(f_a)} \ar[ur]|{M_j(f_a)}   \ar@{}[rr]|(.4){(\flat)}  &   &   M_k(R(b)R(a)x) \ar[ul]|{M(b)_{R(a)x}} \ar[r]|-{M_k(\theta_{b,a}x)} & M_k(R(ba)x). \ar[llluu]_{M(ba)_x} \ar@{}[llu]|(.6){(\natural)} \ar@{}[lld]|(.6){(\sharp)}  \\
           &       M_k(R(b)y) \ar[ul]|{M(b)_y}  \ar[ur]|{M_k[R(b)(f_a)]} & & \\
M_k(z) \ar[uu]|{M(b)_y \circ M_k(g_b)} \ar[ur]|{M_k(g_b)} \ar[rrruu]_{M_k(h)} &           &  & \\
     } 
$$
Due to
\begin{align*}
     & F_M[(\cdots,0,f_a,0,\cdots)] \circ F_M[(\cdots,0,g_b,0,\cdots)] \\
	   = & F_M[(\cdots,0,g_b,0,\cdots) \circ (\cdots,0,f_a,0,\cdots)] \\
\end{align*}
and the definition of $F_M$, the outer triangle commutes. By the condition (Mod.1) of Definition \ref{rmod}, the diagram $(\natural)$ commutes, and the diagram $(\sharp)$ commutes as $M_k$ is a functor. Therefore the diagram $(\flat)$ commutes, then the equality $(\ddagger)$ holds. This completes the proof.
\end{proof}

\begin{lemma} \label{M}
If $F \in (Gr(R)^{\op}, \Ab)$, then $M_F \in \rMod R$, where $M_F$ is defined in Construction 2.    
\end{lemma}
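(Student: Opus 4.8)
The plan is to verify, directly from the definitions in Construction~2, that $M_F$ satisfies all the data and axioms of Definition~\ref{rmod}. First I would check that each $M_i \colon R(i)^{\op} \to \Ab$ is a well-defined contravariant functor. This amounts to showing $M_i(1_x) = 1_{F(_ix)}$ and $M_i(g' g) = M_i(g) M_i(g')$ for composable $g \colon x \to x'$, $g' \colon x' \to x''$ in $R(i)$. For the identity, note that $M_i(1_x) = F[(\cdots,0,1_x \circ \eta_ix,0,\cdots)] = F[(\cdots,0,\eta_ix,0,\cdots)] = F(1_{_ix})$ by part~(4) of Definition~\ref{lingrocon}, which is $1_{F(_ix)}$ since $F$ is a functor. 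For composition, I would show that $(\cdots,0,(g'g) \circ \eta_ix,0,\cdots)$ equals the composite in $Gr(R)$ of the two morphisms $(\cdots,0,g \circ \eta_ix,0,\cdots)$ and $(\cdots,0,g' \circ \eta_ix,0,\cdots)$; this reduces to a manipulation with the composition formula of Definition~\ref{lingrocon}(3), using axiom (Rep.2) to handle the factor $\theta_{1_i,1_i}$ and the coherence between $\eta_i$ and $R(1_i)$, after which functoriality of $F$ closes the argument.

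Next I would check that, for each morphism $a \colon i \to j$ in $\calC$, the assignment $M(a) \colon a^*M_j \to M_i$ given by $M(a)_x := F[(\cdots,0,1_{R(a)x},0,\cdots)]$ is a natural transformation of functors $R(i)^{\op} \to \Ab$. Naturality in $x$ means that for $g \colon x \to x'$ in $R(i)$ the square relating $M(a)_x, M(a)_{x'}$ and the maps induced by $M_i(g)$, $M_j(R(a)g)$ commutes; this again becomes a comparison of two composites in $Gr(R)$ — namely $(\cdots,0,1_{R(a)x'},0,\cdots) \circ (\cdots,0,R(a)g \circ \eta_j(R(a)x),0,\cdots)$ versus $(\cdots,0,g\circ\eta_ix,0,\cdots)\circ(\cdots,0,1_{R(a)x},0,\cdots)$ — which I expect to match after using (Rep.2) and the identity axioms, and then applying $F$.

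Finally I would verify axioms (Mod.1) and (Mod.2). For (Mod.2): $M(1_i)_x = F[(\cdots,0,1_{R(1_i)x},0,\cdots)]$, and I must show this equals $F[(\cdots,0,\eta_ix,0,\cdots)]$ after precomposing with $\eta_i 1_{M_i}$, i.e.\ that the relevant morphism in $Gr(R)(_ix,_ix)$ coincides with $1_{_ix}$; this is essentially the defining formula for the identity in $Gr(R)$, so it follows from Definition~\ref{lingrocon}(4). For (Mod.1), given $a \colon i \to j$ and $b \colon j \to k$, I must show $M(a)_x \circ a^*M(b)_x \circ (\theta_{b,a}1_{M_k})_x = M(ba)_x$. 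Unwinding via the definition of $M(a)$ and the functoriality of $F$ established above, this reduces to the identity in $Gr(R)$ that the composite of $(\cdots,0,1_{R(a)x},0,\cdots) \in Gr(R)(_ix,_jR(a)x)$ with $(\cdots,0,1_{R(b)R(a)x},0,\cdots) \in Gr(R)(_jR(a)x,_kR(b)R(a)x)$, suitably transported along $\theta_{b,a}x$, equals $(\cdots,0,1_{R(ba)x},0,\cdots) \in Gr(R)(_ix,_kR(ba)x)$; this is exactly the content of the composition rule in Definition~\ref{lingrocon}(3) together with (Rep.2).

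The main obstacle I anticipate is purely bookkeeping: keeping track of which component of each direct sum $\bigoplus_{a}R(-)(R(a)(-),-)$ is nonzero, and correctly inserting the coherence isomorphisms $\eta_i$, $\theta_{b,a}$ in the right places so that the composites in $Gr(R)$ genuinely match. Once each such identity of morphisms in $Gr(R)$ is established, every required commutativity in $\rMod R$ follows formally by applying the functor $F$. I would organize the write-up so that the three or four underlying identities in $Gr(R)$ are isolated as small observations, proved once using (Rep.1), (Rep.2) and Definition~\ref{lingrocon}, and then invoked; this keeps the functoriality-of-$F$ steps short and avoids repeating the same diagram chase.
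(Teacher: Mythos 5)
Your proposal is correct and follows essentially the same route as the paper's proof: reduce each required commutativity (functoriality of $M_i$, (Mod.1), (Mod.2)) to an identity between morphisms in $Gr(R)$, establish those identities from the composition rule of Definition \ref{lingrocon} together with (Rep.2) and the naturality of $\eta_i$, and then apply the functor $F$. The one difference is that you also verify that each $M(a)\colon a^*M_j\to M_i$ is a natural transformation, a check required by Definition \ref{rmod}(2) that the paper's proof leaves implicit; including it is a sensible addition rather than a change of method.
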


\begin{proof}
 Let's first check that $M_i$ in Construction 2 is a functor. For any $x \in \Ob R(i)$, we have
\begin{align*}
     & M_i(1_x) \\
	   def.~of~M_i= & F[(\cdots,0,\eta_ix,0,\cdots)] \\
      = & F(1_{_ix}) \\
      F~preserves~identity= & 1_{F(_ix)}\\
      def.~of~M_i= & 1_{M_i(x).} \\
\end{align*}
For any $x \overset{f}{\longrightarrow} x' \overset{g}{\longrightarrow} x''$ in $\Mor R(i)$, we have to check $M_i(gf)=M_i(f) \circ M_i(g)$. By the definition of $M_i$ in Construction 2, we have 
\begin{align*}
     & M_i(f) \circ M_i(g) \\
	   = & F[(\cdots,0,f \circ \eta_ix,0,\cdots)] \circ F[(\cdots,0,g \circ \eta_ix',0,\cdots)] \\
      = & F[(\cdots,0,g \circ \eta_ix',0,\cdots) \circ (\cdots,0,f \circ \eta_ix,0,\cdots)] \\
\end{align*}
and $M_i(gf)=F[(\cdots,0,gf \circ \eta_ix,0,\cdots)]$, so to check $M_i(gf)=M_i(f) \circ M_i(g)$, it is enough to show 
$$
(\cdots,0,g \circ \eta_ix',0,\cdots) \circ (\cdots,0,f \circ \eta_ix,0,\cdots)=(\cdots,0,gf \circ \eta_ix,0,\cdots).
$$ 
This is true because the outer morphisms of the following diagram commutes:
$$
\xymatrix @C=5pc {  
R(1_i1_i)x \ar@{=}[d] \ar@{=}[dr] \ar[r]^{\theta_{1_i,1_i}x} &  R(1_i)R(1_i)x \ar@{}[ld]|(.3){(\triangle)} \ar[r]^{R(1_i)(f \circ \eta_ix)} \ar[d]^{R(1_i)\eta_ix} &  R(1_i)x' \ar[r]^{g \circ \eta_ix'} \ar[d]^{\eta_ix'} & x'' \\
R(1_i)x \ar[dr]_{\eta_ix}     &  R(1_i)x \ar@{}[r]|{(\diamondsuit)} \ar[d]^{\eta_ix} \ar[ur]_{R(1_i)(f)} &1_{R(i)}x'=x' \ar[ur]_{g} &  \\
& 1_{R(i)}x=x \ar[ur]_{f} \ar@/_3pc/[uurr]_{gf} & & \\
     } 
$$
The triangle $(\triangle)$ commutes by the inverse of the condition of (Rep.2) of Definition \ref{rep}. Since $\eta_i$ is a natural transformation, hence the parallelogram $(\diamondsuit)$ in the middle commutes. It is easy to see that the other diagrams commute.

In order to check $M_F \in \rMod R$, by the Definition of $R$-modules, we also have to check that the conditions (Mod.1) and (Mod.2) of the Definition \ref{rmod} are both satisfied. Let $i \overset{a}{\longrightarrow} j \overset{b}{\longrightarrow} k$ be two composable morphisms in $\calC$. In order to check the condition (Mod.1), we have to check the following diagram commutes:
     $$
     \xymatrix @C=5pc {
      a^*b^*M_k \ar[d]_{\theta_{b,a}1_{M_k}} \ar[r]^{a^*M(b)} & a^*M_j \ar[r]^{M(a)} & M_i \\
     (ba)^*M_k \ar[urr]_{M(ba)} & & \\
     } 
     $$ 
By the definition of $M_i, M_j, M_k$ in Construction 2, it is equivalent to check the following diagram commutes:
     $$
     \xymatrix @C=7pc {
      F(_kR(b)R(a)x) \ar[d]_{F[(\cdots,0,\theta_{b,a}x \circ \eta_kR(ba)x,0,\cdots)]} \ar[r]^-{F[(\cdots,0,1_{R(b)R(a)x},0,\cdots)]} & F(_jR(a)x) \ar[r]^-{F[(\cdots,0,1_{R(a)x},0,\cdots)]} & F(_ix) \\
     F(_kR(ba)x) \ar[urr]_{F[(\cdots,0,1_{R(ba)x},0,\cdots)]} & & \\
     } 
     $$ 
where $\theta_{b,a}x \circ \eta_kR(ba)x$ is the composite of 
     $$
     \xymatrix @C=3pc {
     R(1_k)R(ba)x \ar[r]^-{\eta_kR(ba)x} & 1_{R(k)}R(ba)x=R(ba)x \ar[r]^-{\theta_{b,a}x} & R(b)R(a)x. \\
     } 
     $$ 
Since $F$ is a functor, it is equivalent to check the following diagram commutes:  
     $$
     \xymatrix @C=7pc {
      _kR(b)R(a)x  & _jR(a)x \ar[l]_-{(\cdots,0,1_{R(b)R(a)x},0,\cdots)}  &  _ix \ar[dll]^{(\cdots,0,1_{R(ba)x},0,\cdots)} \ar[l]_-{(\cdots,0,1_{R(a)x},0,\cdots)} \\
     _kR(ba)x \ar[u]^{(\cdots,0,\theta_{b,a}x \circ \eta_kR(ba)x,0,\cdots)} & & \\
     } 
     $$ 
This is true due to the following commuting diagram (using the inverse of the condition (Rep.2) of the Definition \ref{rep}):  
    $$
     \xymatrix @C=4pc {
     R(ba)x \ar@{=}[d] \ar[r]^{\theta_{b,a}x}  & R(b)R(a)x \ar[r]^{R(b)1_{R(a)x}} & R(b)R(a)x \ar[r]^{1_{R(b)R(a)x}} & R(b)R(a)x\\
     R(ba)x \ar[r]^{\theta_{1_k,ba}x}  & R(1_k)R(ba)x \ar[r]^-{R(1_k)(1_{R(ba)x})} & R(1_k)R(ba)x \ar[r]^-{\theta_{b,a}x \circ \eta_kR(ba)x} & R(b)R(a)x. \ar@{=}[u] \\
     } 
     $$ 
Therefore, the condition (Mod.1) of the Definition \ref{rmod} is satisfied.

In order to check condition (Mod.2), we have to check the following diagram commutes:   
    $$
    \xymatrix @C=5pc {
      (1_i)^*M_i \ar[r]^{M(1_i)}  & M_i \\
      M_i \ar[u]^{\eta_i 1_{M_i}} \ar[ur]_{1_{M_i}}  & \\
     } 
    $$
That is,      
    $$
    \xymatrix @C=5pc {
      M_i(R(1_i)(x)) \ar[r]^-{M(1_i)_x}  & M_i(x) \\
      M_i(x) \ar[u]^{\eta_i 1_{M_i}x} \ar[ur]_{1_{M_i}(x)}  & \\
     } 
    $$
By the definition of $M_i$ in Construction 2, it is equivalent to check the following diagram commutes:  
    $$
    \xymatrix @C=9pc {
      F(_iR(1_i)(x)) \ar[r]^-{F[(\dots,0,1_{R(1_i)(x)},0.\cdots)]}  & F(_ix) \\
      F(_ix) \ar[u]^{F[(\dots,0,\eta_ix \circ \mu_{1_i,1_i}x,0.\cdots)]} \ar[ur]|{1_{F(_ix)}=F[(\dots,0,\eta_ix,0.\cdots)]}  & \\
     } 
    $$
where $\eta_ix \circ \mu_{1_i,1_i}x$ is the composite of
    $$
    \xymatrix @C=3pc {
      R(1_i)R(1_i)(x) \ar[r]^-{\mu_{1_i,1_i}x} & R(1_i)(x) \ar[r]^-{\eta_ix} & 1_{R(i)}(x)=x. \\
     } 
    $$
Since $F$ is a functor, it is equivalent to check the following diagram commutes:  
    $$
    \xymatrix @C=7pc {
      _iR(1_i)(x) \ar[d]_{(\dots,0,\eta_ix \circ \mu_{1_i,1_i}x,0.\cdots)}   & _ix \ar[dl]^{(\dots,0,\eta_ix,0.\cdots)} \ar[l]_-{(\dots,0,1_{R(1_i)(x)},0.\cdots)} \\
      _ix    & \\
     } 
    $$
This is true since we have the following commuting diagram:    
     $$
    \xymatrix @C=5pc {
   R(1_i)(x) \ar@/_2pc/[rrr]_-{\eta_ix} \ar[r]^-{\theta_{1_i,1_i}x} & R(1_i)R(1_i)(x) \ar[r]^-{R(1_i)(1_{R(1_i)(x)})} & R(1_i)R(1_i)(x) \ar[r]^-{\eta_ix \circ \mu_{1_i,1_i}x} & x. \\
     } 
    $$
Therefore, the condition (Mod.2) of the Definition \ref{rmod} is satisfied.    
This completes the proof.
\end{proof}

Now we can characterize $\rMod R$ as a functor category (see Theorem \ref{howeplus}). This can be viewed as the pseudofunctor analogue of the following Howe's result.

\begin{theorem} \label{How81} (see \cite[Proposition 5]{How81}, \cite[Theorem 5]{Mur06B} or \cite[Theorem 3.0.1]{Wu24})
Let $\calC$ be a small category and let $R:\calC^{\rm op} \to \mbox{\rm Ring}$ be a presheaf of unital rings on $\calC$ (note that $R$ is a \emph{strict} functor here!). Then we have the following equivalence 
	$$
	\rMod R \simeq (Gr(R)^{\op}, \Ab),
	$$
where $\rMod R$ is the category of right $R$-modules (see \cite[Definition 2.1.13]{Wu24}) and $Gr(R)$ is the linear Grothendieck construction of $R$ (see \cite[Definition 2.2.1]{Wu24}).
\end{theorem}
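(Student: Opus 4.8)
The plan is to promote Constructions 1 and 2 to a pair of mutually inverse functors
\[
\Phi\colon \rMod R \longrightarrow (Gr(R)^{\op},\Ab), \qquad \Psi\colon (Gr(R)^{\op},\Ab)\longrightarrow \rMod R ,
\]
and to show that both composites are the identity. On objects one puts $\Phi(M):=F_M$ and $\Psi(F):=M_F$; these land in the correct categories by Lemma \ref{F} and Lemma \ref{M}, so the first task is to define the two functors on morphisms. A morphism $\phi=(\phi_i)_{i\in\Ob\calC}\colon M\to N$ of $\rMod R$ is a family of natural transformations $\phi_i\colon M_i\to N_i$ with $\phi_i\circ M(a)=N(a)\circ a^{*}\phi_j$ for every $a\colon i\to j$, and I set $\Phi(\phi):=F_\phi$ with $(F_\phi)_{_ix}:=(\phi_i)_x$. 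Dually, for a natural transformation $\tau\colon F\to G$ I set $\Psi(\tau):=M_\tau$ with $((M_\tau)_i)_x:=\tau_{_ix}$. Checking that $F_\phi$ is natural for an arbitrary morphism $(f_a)_a$ of $Gr(R)$ is a short diagram chase combining the naturality of the $\phi_i$ with the compatibility squares for $\phi$; checking that each $(M_\tau)_i$ is natural and that the family $\{(M_\tau)_i\}_i$ is compatible with the structure maps $M_F(a)$, $M_G(a)$ uses only naturality of $\tau$ (with respect to the morphisms $(\dots,0,g\circ\eta_ix,0,\dots)$ and $(\dots,0,1_{R(a)x},0,\dots)$ of $Gr(R)$ occurring in Construction 2). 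Functoriality of $\Phi$ and $\Psi$ is then immediate, both assignments being defined componentwise.

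It remains to identify the two composites with identity functors. For $\Psi\Phi$: unwinding Construction 2 applied to $F_M$ gives $(M_{F_M})_i(x)=F_M(_ix)=M_i(x)$ on objects; for $g\colon x\to x'$ in $R(i)$,
\[
(M_{F_M})_i(g)=F_M[(\dots,0,g\circ\eta_ix,0,\dots)]=M(1_i)_x\circ M_i(\eta_ix)\circ M_i(g),
\]
which collapses to $M_i(g)$ by axiom (Mod.2) of Definition \ref{rmod}; and $M_{F_M}(a)_x=F_M[(\dots,0,1_{R(a)x},0,\dots)]=M(a)_x\circ M_j(1_{R(a)x})=M(a)_x$. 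Hence $M_{F_M}=M$, and since $\Psi\Phi(\phi)$ has components $((M_{F_\phi})_i)_x=(\phi_i)_x$, we get $\Psi\Phi=\Id_{\rMod R}$. For $\Phi\Psi$: Construction 1 applied to $M_F$ gives $F_{M_F}(_ix)=(M_F)_i(x)=F(_ix)$, and for $(f_a)_a\colon {}_ix\to{}_jy$,
\begin{align*}
F_{M_F}[(f_a)_a]&=\sum_{a,f}M_F(a)_x\circ (M_F)_j(f)\\
&=\sum_{a,f}F[(\dots,0,1_{R(a)x},0,\dots)_a]\circ F[(\dots,0,f\circ\eta_jR(a)x,0,\dots)_{1_j}].
\end{align*}
Composing the two morphisms of $Gr(R)$ by Definition \ref{lingrocon}(3), only the $b=1_j$, $a'=a$ summand survives, and it equals $f\circ(\eta_jR(a)x\circ\theta_{1_j,a}x)=f$ once one observes that $\eta_jR(a)x\circ\theta_{1_j,a}x=1_{R(a)x}$; this last identity is precisely axiom (Rep.2) of Definition \ref{rep} read through the inverse relations $\delta=\eta^{-1}$, $\mu=\theta^{-1}$ recalled in Remark \ref{diff}(1). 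Reassembling the sum by additivity of $F$ on the hom-group $Gr(R)({}_ix,{}_jy)=\bigoplus_a R(j)(R(a)x,y)$ gives $F_{M_F}[(f_a)_a]=F[(f_a)_a]$, so $F_{M_F}=F$; the matching (immediate) computation on natural transformations gives $\Phi\Psi=\Id$. Thus $\Phi$ and $\Psi$ are mutually inverse, which proves Theorem \ref{howeplus}; in fact the equivalence is an isomorphism of categories.

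Almost all of the technical work has already been absorbed into Lemmas \ref{F} and \ref{M}, so what is left is essentially formal. The one step that genuinely requires care is the computation of the composite $F_{M_F}[(f_a)_a]$ above: one must keep precise track of which $\calC$-indexed direct summand of $Gr(R)({}_ix,{}_jy)$ survives under the composition law of Definition \ref{lingrocon}, and then invoke the pseudofunctor coherence (Rep.2) \emph{in the right direction} to cancel the comparison $2$-cells $\theta_{1_j,a}$ and $\eta_j$; getting these directions wrong is the obvious pitfall, which is why Remark \ref{diff}(1) is needed. Once that cancellation is in hand, the remaining verifications — naturality of $F_\phi$ and of $M_\tau$, and functoriality of $\Phi$ and $\Psi$ — are routine diagram chases of the same flavour as those in the proofs of Lemmas \ref{F} and \ref{M}, using (Rep.1), (Mod.1), (Mod.2), and the naturality of $\eta$ and $\theta$.
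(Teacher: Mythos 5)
Your argument is correct and is essentially the paper's own proof of the more general Theorem \ref{howeplus}: the paper does not actually prove Theorem \ref{How81} but only cites it, and your Constructions 1 and 2, Lemmas \ref{F} and \ref{M}, the computation $(M_{F_M})_i(g)=M_i(g)$ via (Mod.2), and the key cancellation $\eta_jR(a)x\circ\theta_{1_j,a}x=1_{R(a)x}$ via (Rep.2) reproduce exactly the paper's argument for the pseudofunctor version, of which the stated theorem is the strict, ring-valued special case where all coherence cells are identities. The only step you leave implicit is the bookkeeping that identifies the contravariant presheaf $R\colon\calC^{\op}\to\mbox{\rm Ring}$ of the statement with a covariant representation of $\calC^{\op}$ valued in one-object preadditive categories so that the general theorem literally applies; your additional details (the action of $\Phi$ and $\Psi$ on morphisms and their functoriality) merely supply verifications the paper leaves to the reader.
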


\begin{theorem} \label{howeplus}
Let $\calC$ be a small category and let $R: \calC \to \Add$ be a representation of the category $\calC$, then we have the following equivalence
 $$
 \rMod R \simeq (Gr(R)^{\op}, \Ab).
 $$    
\end{theorem}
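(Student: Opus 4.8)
The plan is to build an equivalence of categories directly from Constructions 1 and 2, by verifying that $M \mapsto F_M$ and $F \mapsto M_F$ are mutually quasi-inverse functors. The two lemmas already established (Lemma \ref{F} and Lemma \ref{M}) show that these assignments land in the correct categories on objects, so the remaining work splits into three parts: (i) promote both assignments to functors by defining their action on morphisms and checking functoriality; (ii) check that the composite $F \mapsto M_F \mapsto F_{M_F}$ is naturally isomorphic to the identity on $(Gr(R)^{\op}, \Ab)$; and (iii) check that the composite $M \mapsto F_M \mapsto M_{F_M}$ is naturally isomorphic to the identity on $\rMod R$.

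For part (i), given a morphism $\varphi \colon M \to N$ in $\rMod R$ (a compatible family of natural transformations $\varphi_i \colon M_i \to N_i$), I would define $(F_\varphi)_{_ix} := (\varphi_i)_x \colon M_i(x) \to N_i(x)$ and check naturality against the generating morphisms $(\cdots,0,f_a,0,\cdots)$ of $Gr(R)$; this reduces to the commutativity of the square expressing that $\varphi$ intertwines the structure maps $M(a)$ and $N(a)$, together with naturality of $\varphi_i$, exactly the data packaged in a morphism of $R$-modules. Dually, given $\psi \colon F \to G$ in $(Gr(R)^{\op}, \Ab)$, I would set $((M_\psi)_i)_x := \psi_{_ix}$ and verify that this is a natural transformation $M_F \to M_G$ of $R(i)$-modules for each $i$ and that it respects the structure maps $M(a)$; both are immediate from naturality of $\psi$ applied to the morphisms $(\cdots,0,g\circ\eta_ix,0,\cdots)$ and $(\cdots,0,1_{R(a)x},0,\cdots)$ respectively. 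Functoriality in both directions is then formal.

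For parts (ii) and (iii) I would trace through the definitions. Starting from $F$, Construction 2 gives $M_F$ with $(M_F)_i(x) = F(_ix)$, and then Construction 1 gives $F_{M_F}$ with $F_{M_F}(_ix) = (M_F)_i(x) = F(_ix)$ on objects, so on objects the composite is literally the identity; on a morphism $(f_a)_a$ one computes $F_{M_F}[(f_a)_a] = \sum_{a,f} M_F(a)_x \circ (M_F)_j(f)$ and, unwinding the definitions of $M_F(a)$ and $(M_F)_j(f)$ in terms of $F$ evaluated at $(\cdots,0,1_{R(a)x},0,\cdots)$ and $(\cdots,0,f\circ\eta_jx,0,\cdots)$, this collapses — using functoriality of $F$ and the identity $(\cdots,0,1_{R(a)x},0,\cdots)\circ(\cdots,0,f\circ\eta_j(R(a)x)\text{-type adjustments},0,\cdots)$, i.e. the composition rule in $Gr(R)$ together with the unit axioms — to $F[(f_a)_a]$. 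Hence $F_{M_F} = F$ on the nose, or at worst canonically isomorphic, and this identification is visibly natural in $F$. Conversely, starting from $M$, we get $(M_{F_M})_i(x) = F_M(_ix) = M_i(x)$ on objects; on a morphism $g \colon x \to x'$ in $R(i)$ one has $(M_{F_M})_i(g) = F_M[(\cdots,0,g\circ\eta_ix,0,\cdots)] = M(1_i)_x \circ M_i(g\circ\eta_ix)$, which by (Mod.2) of Definition \ref{rmod} and functoriality of $M_i$ equals $M_i(g)$; and the structure maps $M_{F_M}(a)_x = F_M[(\cdots,0,1_{R(a)x},0,\cdots)] = M(a)_x \circ M_i(1_{R(a)x}) = M(a)_x$ are recovered. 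So again the round trip is the identity, naturally in $M$.

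The main obstacle I expect is bookkeeping rather than a conceptual gap: one must be scrupulous about where the $2$-isomorphisms $\eta$, $\theta$ (equivalently $\delta$, $\mu$) are inserted, since the definitions in Constructions 1 and 2 are threaded through these coherence maps, and the collapse computations in parts (ii) and (iii) rely on precisely the pseudofunctor axioms (Rep.1), (Rep.2) and the module axioms (Mod.1), (Mod.2) to cancel the unit and associativity isomorphisms. Concretely, the delicate point is verifying that the composite of generators $(\cdots,0,1_{R(a)x},0,\cdots)\circ(\text{a }\eta\text{-twisted generator})$ in $Gr(R)$ is again the expected generator, so that functoriality of $F$ does the collapsing; this is the same triangle-chasing that appears in the proofs of Lemma \ref{F} and Lemma \ref{M}, and can be handled by the same diagrammatic method. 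Once both round-trip natural isomorphisms are in hand, the equivalence $\rMod R \simeq (Gr(R)^{\op}, \Ab)$ follows immediately.
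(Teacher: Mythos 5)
Your proposal is correct and follows essentially the same route as the paper: the paper's proof also takes Constructions 1 and 2 as the two quasi-inverse assignments, verifies $(M_{F_M})_i(g)=M(1_i)_x\circ M_i(g\circ\eta_ix)=M_i(g)$ via (Mod.2), recovers the structure maps $M(a)_x$, and collapses $F_{M_F}[(f_a)_a]$ to $F[(f_a)_a]$ by the composition rule in $Gr(R)$ and the unit coherence. The only difference is that you explicitly address functoriality on morphisms (your part (i)), which the paper leaves implicit.
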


\begin{proof}
Let $M \in \rMod R$ and $F \in (Gr(R)^{\op}, \Ab)$, we define
$$
 \Phi: \rMod R \to (Gr(R)^{\op}, \Ab)
$$ 
and
$$
 \Psi: (Gr(R)^{\op}, \Ab) \to \rMod R
$$ 
by $\Phi(M)=F_M$ and $\Psi(F)=M_F$ respectively. This two functors are well defined by Lemma \ref{F} and Lemma \ref{M}. We will check that they are inverse to each other.

Firstly, let's check that $\Psi \circ \Phi \cong \Id_{\rMod R}$. Since $(\Psi \circ \Phi)(M)=\Psi(\Phi(M))=\Psi(F_M)=M_{F_M}$, we have to show $M_{F_M} \cong M$ in $\rMod R$. In order to do that let's first prove $(M_{F_M})_i \cong M_i$ as functors in $(R(i)^{\op}, \Ab)$ for each $i \in \Ob \calC$. Let $g: x \to x'$ be a morphism in $R(i)$, then we have the following commuting diagram:
    $$
    \xymatrix{
     (M_{F_M})_i(x) \ar@{=}[d] \ar@{}[rrd]|{\mbox{Construction 2}}  &   &  (M_{F_M})_i(x') \ar[ll]_{(M_{F_M})_i(g)} \ar@{=}[d]  \\
    F_M(_ix) \ar@{=}[d]  &       & F_M(_ix') \ar[ll]|{F_M[(\cdots,0,g \circ \eta_ix, 0, \cdots)]} \ar@{=}[d] \\
     M_i(x) \ar@{}[rr]|{\mbox{Construction 1}} && M_i(x') \ar[dl]^{M_i(g \circ \eta_ix)} \\
     & M_i(R(1_i)x). \ar[ul]^{M(1_i)_x} & \\
     } 
    $$   
Therefore, $(M_{F_M})_i(g)=M(1_i)_x \circ M_i(g \circ \eta_ix)$. By (Mod.2) of Definition \ref{rmod}, one can deduce that $M(1_i)_x=M_i((\eta_ix)^{-1})$. Thus $(M_{F_M})_i(g)=M(1_i)_x \circ M_i(g \circ \eta_ix)=M_i((\eta_ix)^{-1}) \circ M_i(g \circ \eta_ix)= M_i(g)$. Hence $(M_{F_M})_i \cong M_i$ as functors in $(R(i)^{\op}, \Ab)$.

To show $M_{F_M} \cong M$ in $\rMod R$, we also have to prove that, for any $a: i \to j$,
    $$
    \xymatrix  @C=3pc {
   a^*(M_{F_M})_j \ar[r]^-{(M_{F_M})(a)_x} & (M_{F_M})_i & = & a^*M_j \ar[r]^-{M(a)_x} & M_i.  \\
     } 
    $$    
For each $x \in \Ob R(i)$, we have the following commuting diagram:
    $$
    \xymatrix{
     a^*(M_{F_M})_j(x) \ar[rr] \ar@{=}[d] \ar@{}[rrd]|{\mbox{Construction 2}}  &   &  (M_{F_M})_i(x)  \ar@{=}[d]  \\
    F_M(_jR(a)(x)) \ar@{=}[d] \ar[rr]|{F_M[(\cdots,0,1_{R(a)(x)}, 0, \cdots)]}  &       & F_M(_ix)  \ar@{=}[d] \\
     M_j(R(a)(x)) \ar@{}[rr]|{\mbox{Construction 1}} \ar[dr]_{M_j(1_{R(a)(x)})} && M_i(x)  \\
     & M_j(R(a)(x)). \ar[ur]_{M(a)_x} & \\
     } 
    $$
Therefore, we have
\begin{align*}
     &  (M_{F_M})(a)_x\\
	   = &  F_M[(\cdots,0,1_{R(a)(x)}, 0, \cdots)]\\
      = & M(a)_x \circ M_j(1_{R(a)(x)})\\
      M_j~preserves~identity= & M(a)_x \circ 1_{M_j(R(a)(x))} \\
      = & M(a)_x. \\
\end{align*}

Secondly, let's check that $\Phi \circ \Psi \cong \Id_{(Gr(R)^{\op,} \Ab)}$. Since $(\Phi \circ \Psi)(M)=\Phi(\Psi(M))=\Phi(M_F)=F_{M_F}$, we have to show $F_{M_F} \cong F$ in $(Gr(R)^{\op}, \Ab)$. For any morphism 
    $$
    \xymatrix @C=5pc {
    _ix \ar[r]^-{(\cdots,0,f_a,0,\cdots)} & _jy,
     } 
    $$
we have to show 
$$
F_{M_F}[(\cdots,0,f_a,0,\cdots)]=F[(\cdots,0,f_a,0,\cdots)].
$$
For $F_{M_F}[(\cdots,0,f_a,0,\cdots)]$, by Construction 2 and Construction 1, we have the following commuting diagram:
    $$
    \xymatrix @C=5pc {
      F_{M_F}(_ix) \ar@{=}[d] \ar[rr]^-{F_{M_F}[(\cdots,0,f_a,0,\cdots)]} \ar@{}[rrd]|{\mbox{Construction 1}} & & F_{M_F}(_jy) \ar@{=}[d] \\
     (M_F)_j(y) \ar[r]^-{(M_F)_j(f)} \ar@{}[rd]|{\mbox{Construction 2}} \ar@{=}[d]  & (M_F)_j(R(a)(x)) \ar@{}[rd]|{\mbox{Construction 2}} \ar[r]^-{M_F(a)_x} \ar@{=}[d] & (M_F)_i(x) \ar@{=}[d] \\
     F(_jy) \ar[r]_-{F[(\cdots,0,f \circ \mu_{1_j,a}x,0,\cdots)]} &  F(_jR(a)(x)) \ar[r]_-{F[(\cdots,0,1_{R(a)(x)},0,\cdots)]} & F(_ix). \\
     } 
    $$
It follows that
$$
F_{M_F}[(\cdots,0,f_a,0,\cdots)]=F[(\cdots,0,1_{R(a)(x)},0,\cdots)] \circ F[(\cdots,0,f \circ \mu_{1_j,a}x,0,\cdots)].
$$
In order to show 
$$
F_{M_F}[(\cdots,0,f_a,0,\cdots)]=F[(\cdots,0,f_a,0,\cdots)],
$$
it is enough to show
$$
(\cdots,0,f \circ \mu_{1_j,a}x,0,\cdots) \circ (\cdots,0,1_{R(a)(x)},0,\cdots) = (\cdots,0,f_a,0,\cdots).
$$
It is true because we have the following commuting diagram:
    $$
    \xymatrix @C=5pc {
     R(1_ja)(x) \ar[r]^{\theta_{1_j,a}x} \ar@{=}[d] & R(1_j)R(a)(x) \ar[r]^{R(1_j)1_{R(a)(x)}} & R(1_j)R(a)(x) \ar[d]^{f \circ \mu_{1_j,a}x} \\
     R(a)(x) \ar[rr]^{f} & & y. \\
     } 
    $$
This completes the proof.    

\end{proof}

\begin{example} \label{example1}
Let $k$ be a unital commutative ring, so it can be considered as a preadditive category with one object. Let $\calC$ be the path category of a quiver $Q$, and let $R$ be the constant functor, namely, having the constant value $k$ on objects and the constant value $id_k$ on morphisms. In this circumstance, the Theorem \ref{howeplus} above can be specialised to the well known equivalence between the category of the representations of $Q$ with values in $\rMod k$ and the category of the additive functors from the $k$-linear path category $Gr(R)$ of $Q$ to the Abelian group category $\Ab$.
\end{example}

\begin{corollary} \label{Groproj}
Let $\calC$ be a small category and let $R: \calC \to \Add$ be a representation of the category $\calC$. Then the category of right $R$-modules $\rMod R$ is a Grothendieck category and it has a projective generator.    
\end{corollary}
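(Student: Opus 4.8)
The plan is to deduce the statement directly from Theorem~\ref{howeplus}. By that theorem there is an equivalence $\rMod R \simeq (Gr(R)^{\op}, \Ab)$, so it suffices to show that for the category $\calD := Gr(R)$ the functor category $(\calD^{\op}, \Ab)$ is a Grothendieck category possessing a projective generator; the claim for $\rMod R$ then follows by transporting along the equivalence.

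First I would record that $Gr(R)$ is a small preadditive category. Its class of objects $\cup_{i \in \Ob \calC}\{i\}\times \Ob R(i)$ is a set, since $\Ob \calC$ and each $\Ob R(i)$ are sets; each morphism set $Gr(R)({}_ix,{}_jy) = \bigoplus_{a \in \calC(i,j)} R(j)(R(a)x,y)$ is an abelian group, being a direct sum of the abelian groups $R(j)(R(a)x,y)$; and the composition law of Definition~\ref{lingrocon}(3) is additive in $f$ and in $g$ separately, hence $\mathbb{Z}$-bilinear, while its associativity and unitality are encoded by the axioms (Rep.1) and (Rep.2) (equivalently, $Gr(R)$ is the linear Grothendieck construction of \cite[Definition~4.1]{Asa13b} applied to the pseudofunctor $R$, using Remark~\ref{diff}(1)).

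Then I would invoke the standard structure theory of module categories over a small preadditive category $\calD$. The category $(\calD^{\op}, \Ab)$ is abelian, with kernels, cokernels, arbitrary (co)products and filtered colimits all computed objectwise; since filtered colimits are exact in $\Ab$, the axiom (AB5) holds, so $(\calD^{\op}, \Ab)$ is a Grothendieck category once we exhibit a generator. By the additive Yoneda lemma, for every object $c \in \Ob \calD$ the evaluation functor $F \mapsto F(c)$ is naturally isomorphic to $\Hom_{(\calD^{\op},\Ab)}(\calD(-,c),-)$; since evaluation is exact, each representable $\calD(-,c)$ is projective, and $\bigoplus_{c \in \Ob \calD} \calD(-,c)$ is a generator. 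Hence $\bigoplus_{c \in \Ob \calD}\calD(-,c)$ is a projective generator of $(\calD^{\op},\Ab)$. Applying this to $\calD = Gr(R)$ and carrying the result back through the equivalence $\Psi$ of Theorem~\ref{howeplus} yields the corollary; concretely, $\Psi\bigl(\bigoplus_{c\in\Ob Gr(R)} Gr(R)(-,c)\bigr)$ is a projective generator of $\rMod R$.

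I do not expect a genuine obstacle: the substantive work is already contained in Theorem~\ref{howeplus}. The only points needing (routine) care are the verification that $Gr(R)$ is small and preadditive, and the (well-known) fact that module categories over small preadditive categories are Grothendieck with the coproduct of representables as a projective generator; if one wishes, the resulting projective generator of $\rMod R$ can be described explicitly using Construction~2.
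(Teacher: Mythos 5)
Your proposal is correct and follows exactly the paper's route: the paper's entire proof is ``It follows from Theorem \ref{howeplus} immediately,'' leaving implicit the standard facts about functor categories on small preadditive categories that you spell out. Your additional verifications (that $Gr(R)$ is small and preadditive, and that $\bigoplus_{c}\calD(-,c)$ is a projective generator of $(\calD^{\op},\Ab)$) are the right details to fill in and contain no gaps.
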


\begin{proof}
It follows from Theorem \ref{howeplus} immediately.    
\end{proof}

\begin{remark}
In \cite[Theorem 3.18]{EV17}, the authors proved the same result with different method, and our result is more general than theirs since we don't assume $\calC$ is a poset.  
\end{remark}

Similar to \cite[Theorem 4.0.1]{Wu24}, we can classify the hereditary torsion pairs in $\rMod R$ by linear Grothendieck topologies. For the definitions of hereditary torsion pairs and linear Grothendieck topologies, one can see \cite[Definition 2.3.1 \& 2.3.2 and Definition 2.1.15]{Wu24}.   

\begin{corollary} \label{htp}   
Let $\calC$ be a small category and let $R: \calC \to \Add$ be a representation of the category $\calC$. Then there is an (explicit) one-to-one correspondence between linear Grothendieck topologies on $Gr(R)$ and hereditary torsion pairs in $\rMod R$. 
\end{corollary}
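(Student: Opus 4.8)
The plan is to deduce the statement from the equivalence $\rMod R \simeq (Gr(R)^{\op}, \Ab)$ of Theorem \ref{howeplus} together with the ringoid version of Gabriel's classification of hereditary torsion pairs. The argument splits into two essentially independent bijections, which I will compose at the end.

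\textbf{Step 1 (transporting hereditary torsion pairs across the equivalence).} Let $\Phi\colon \rMod R \to (Gr(R)^{\op}, \Ab)$ and $\Psi$ be the mutually quasi-inverse functors furnished by Theorem \ref{howeplus} (namely $\Phi(M) = F_M$ of Construction 1 and $\Psi(F) = M_F$ of Construction 2). An equivalence of Grothendieck categories is exact and preserves monomorphisms, cokernels, $\Hom$-groups and arbitrary coproducts. Hence if $(\calT, \calF)$ is a hereditary torsion pair in $\rMod R$, then the pair of essential images $(\Phi\,\calT, \Phi\,\calF)$ satisfies $\Hom(\Phi\,\calT, \Phi\,\calF) = 0$, every object of $(Gr(R)^{\op}, \Ab)$ sits in a short exact sequence with torsion subobject and torsion-free quotient, and $\Phi\,\calT$ is closed under subobjects; that is, $(\Phi\,\calT, \Phi\,\calF)$ is a hereditary torsion pair in $(Gr(R)^{\op}, \Ab)$. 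The symmetric argument with $\Psi$ shows this assignment is bijective, so $\Phi$ and $\Psi$ induce mutually inverse bijections between hereditary torsion pairs in $\rMod R$ and hereditary torsion pairs in $(Gr(R)^{\op}, \Ab)$.

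\textbf{Step 2 (the ringoid Gabriel correspondence on $Gr(R)$).} Since $Gr(R)$ is a small preadditive category (Definition \ref{lingrocon}), $(Gr(R)^{\op}, \Ab)$ is the category of right $Gr(R)$-modules, so by \cite[Theorem 4.0.1]{Wu24} there is an explicit one-to-one correspondence between linear Grothendieck topologies on $Gr(R)$ and hereditary torsion pairs in $(Gr(R)^{\op}, \Ab)$: a topology $\calJ$ is sent to the torsion pair whose torsion class consists of the $\calJ$-torsion modules, and a hereditary torsion pair $(\calT, \calF)$ is sent to the topology whose covering sieves of an object $_ix$ are those subfunctors $S \hookrightarrow Gr(R)(-, {}_ix)$ with quotient $Gr(R)(-, {}_ix)/S$ lying in $\calT$.

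Composing the bijections of Steps 1 and 2 yields the desired correspondence, and it is explicit because each constituent is: a linear Grothendieck topology $\calJ$ on $Gr(R)$ corresponds to the hereditary torsion pair $(\Psi\,\calT_{\calJ}, \Psi\,\calF_{\calJ})$ in $\rMod R$, while a hereditary torsion pair $(\calT, \calF)$ in $\rMod R$ corresponds to the topology on $Gr(R)$ whose covering sieves of $_ix$ are the subfunctors of $Gr(R)(-, {}_ix)$ with quotient in the essential image $\Phi\,\calT$. The one point needing care — and the step I expect to be the main (though only formal) obstacle — is the verification in Step 1 that $\Phi$ and $\Psi$ genuinely preserve all the defining data of a hereditary torsion pair, i.e. the orthogonality, the torsion short exact sequences, and closure of the torsion class under subobjects; this follows at once from exactness of the equivalence but should be written out rather than merely asserted. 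An alternative, self-contained route would reprove the $Gr(R)$-instance of the ringoid Gabriel correspondence from scratch (covering sieves $\leftrightarrow$ torsion classes via quotients of representable functors), bypassing \cite[Theorem 4.0.1]{Wu24}, but invoking that result keeps the argument short.
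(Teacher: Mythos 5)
Your proposal is correct and follows essentially the same route as the paper: the paper's proof is exactly the composition of the equivalence of Theorem \ref{howeplus} with the classification of hereditary torsion pairs in $(Gr(R)^{\op},\Ab)$ by linear Grothendieck topologies (the paper cites \cite[Theorem 3.7]{PSV21} for that second ingredient, while you cite \cite[Theorem 4.0.1]{Wu24}, which the paper itself names as the model for this corollary). Your Step 1, spelling out that an exact equivalence of Grothendieck categories transports hereditary torsion pairs bijectively, is a detail the paper leaves implicit.
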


\begin{proof}
It follows from Theorem \ref{howeplus} and \cite[Theorem 3.7]{PSV21}.
\end{proof}

\subsection{Characterizing it as module categories of algebras} \label{chaalg}
In this subsection, we will characterize the category of right $R$-modules $\rMod R$ as the category of modules over a pseudoskew category algebra $R[\calC]$. This can be seen as a higher analogue of the \cite[Theorem A]{WX23}.
   
\begin{theorem} \label{higherskew}
Let $\calC$ be a small category and let $R: \calC \to \Add$ be a representation of the category $\calC$. If $\Ob \calC < +\infty$ and $\Ob R(i) < +\infty$ for all $i \in \Ob \calC$, then we have the following equivalence
 $$
 \rMod R \simeq \rMod R[\calC].
 $$    
\end{theorem}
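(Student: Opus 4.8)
The plan is to combine the functor-category description already in hand (Theorem \ref{howeplus}) with the standard equivalence between functor categories on a finite category and module categories over the associated category algebra. Concretely, by Theorem \ref{howeplus} we have $\rMod R \simeq (Gr(R)^{\op}, \Ab)$, so it suffices to produce an equivalence $(Gr(R)^{\op}, \Ab) \simeq \rMod R[\calC]$. The finiteness hypotheses $\Ob \calC < +\infty$ and $\Ob R(i) < +\infty$ guarantee that $Gr(R)$ has only finitely many objects, hence the $\mathbb{Z}$-module $\bigoplus_{_ix, _jy} Gr(R)(_ix, _jy)$ is, after adjoining the orthogonal idempotents $1_{_ix}$, a ring with identity $e := \sum_{_ix \in \Ob Gr(R)} 1_{_ix}$; I will first check that this ring is precisely $R[\calC]$. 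That amounts to comparing the composition law in Definition \ref{lingrocon}(3) with the multiplication in Definition \ref{pseudoskewcatealg}: both are given on basis elements $f_a$, $g_b$ by the same recipe $g \circ R(b)f \circ \theta_{b,a}x$ when $\dom(b) = \cod(a)$ and $0$ otherwise, and the unit element of Definition \ref{pseudoskewcatealg}(1) is exactly $\sum 1_{_ix}$ with $1_{_ix}$ as in Definition \ref{lingrocon}(4). So $R[\calC]$ is the category algebra $\mathbb{Z}[Gr(R)]$ of the finite preadditive category $Gr(R)$.

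Next I would invoke (or, since $Gr(R)$ is merely preadditive rather than small-with-finitely-many-morphisms, re-prove in a paragraph) the classical equivalence: for a preadditive category $\calA$ with finitely many objects, the category $(\calA^{\op}, \Ab)$ of additive contravariant functors to abelian groups is equivalent to $\rMod \mathbb{Z}[\calA]$, where $\mathbb{Z}[\calA] = \bigoplus_{A, B} \calA(A,B)$ with the idempotents $e_A = 1_A$. The equivalence sends a functor $F$ to the right module $\bigoplus_{A \in \Ob \calA} F(A)$, with the $R[\calC]$-action of a basis element $\phi \in \calA(A,B) = e_A \mathbb{Z}[\calA] e_B$ given by $F(\phi): F(B) \to F(A)$, extended additively; conversely a right $\mathbb{Z}[\calA]$-module $N$ yields the functor $A \mapsto N e_A$ with $N\phi$ acting as $N e_B \to N e_A$. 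One checks these are mutually inverse up to natural isomorphism, using $\bigoplus_A F(A) e_B = F(B)$ and $N = \bigoplus_A N e_A$, the latter because $\sum_A e_A = 1$ in $R[\calC]$ — this is exactly where the finiteness of $\Ob Gr(R)$ is used, since otherwise the sum $\sum e_A$ is not an element of the algebra and $N \ne \bigoplus N e_A$ in general. Composing this with Theorem \ref{howeplus} gives $\rMod R \simeq \rMod R[\calC]$.

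The main obstacle I anticipate is bookkeeping rather than conceptual: I must be careful that the composition in $Gr(R)$ (which bundles together all morphisms $a \in \calC(i,j)$ into a direct sum and involves the $2$-isomorphisms $\theta_{b,a}$) matches the multiplication in $R[\calC]$ \emph{on the nose}, including the unit, and that "right module" conventions (the handedness of the action, and the $\op$ on $Gr(R)$) line up so that a right $R[\calC]$-module corresponds to a contravariant functor on $Gr(R)$ and not a covariant one. The identity element of $R[\calC]$ being $\sum_{_ix} 1_{_ix}$ rather than a single idempotent is the only place the hypotheses enter, and getting the decomposition $N = \bigoplus_{_ix} N\cdot 1_{_ix}$ to be a genuine direct sum (not just a quotient or subobject) is the one point that genuinely requires $\Ob Gr(R) < +\infty$; everything else is a routine transport of the well-known functor-category/category-algebra dictionary. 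I would therefore structure the proof as: (i) identify $R[\calC] = \mathbb{Z}[Gr(R)]$; (ii) state and prove the finite-preadditive-category dictionary $(Gr(R)^{\op},\Ab) \simeq \rMod \mathbb{Z}[Gr(R)]$; (iii) concatenate with Theorem \ref{howeplus}.
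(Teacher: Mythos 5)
Your proposal is correct, and it takes a genuinely different (though closely related) route from the paper. The paper also starts from Theorem \ref{howeplus}, but then argues via a small projective generator: it sets $G:=\bigoplus_{_ix \in \Ob Gr(R)}\Hom_{Gr(R)}(-,{}_ix)$, notes that $G$ is a projective generator of $(Gr(R)^{\op},\Ab)$ which is \emph{small} because $\Ob Gr(R)<+\infty$, invokes the standard Gabriel--Mitchell/Morita-type theorem to get $\rMod R \simeq \rMod \End(G)$, and then computes $\End(G)\cong \bigoplus_{_jy}\bigoplus_{_ix}\Hom_{Gr(R)}({}_ix,{}_jy)\cong R[\calC]$ using compactness of the first argument and Yoneda. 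You instead identify $R[\calC]$ on the nose with the category algebra $\mathbb{Z}[Gr(R)]$ and prove the explicit finite-preadditive-category dictionary $(\calA^{\op},\Ab)\simeq \rMod\mathbb{Z}[\calA]$ via $F\mapsto \bigoplus_A F(A)$ and $N\mapsto (A\mapsto Ne_A)$. The two resulting equivalences coincide (your $\bigoplus_A F(A)$ is exactly $\Hom(G,F)$), so the difference is one of packaging: the paper's argument is shorter because it delegates the work to the small-projective-generator theorem, while yours is more self-contained and exhibits the module structure explicitly; both use the finiteness hypotheses at the same essential point (for you, to make $\sum_{_ix}1_{_ix}$ an identity and $N=\bigoplus_{_ix}N\cdot 1_{_ix}$ a genuine decomposition; for the paper, to make $G$ small). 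Two minor points of care: the idempotents $1_{_ix}$ need not be ``adjoined''---they already live in $Gr(R)({}_ix,{}_ix)$ by Definition \ref{lingrocon}(4) and are basis elements of $R[\calC]$---and the dictionary requires taking \emph{additive} functors in $(Gr(R)^{\op},\Ab)$, which is the paper's implicit convention throughout.
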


\begin{proof}
By Theorem \ref{howeplus}, we know that 
$$
 \rMod R \simeq (Gr(R)^{\op}, \Ab).
$$
Let 
$$
G:=\bigoplus_{_ix \in \Ob Gr(R)}\Hom_{Gr(R)}(-,_ix),
$$
then $G$ is a projective generator of $(Gr(R)^{\op}, \Ab)$, hence of $\rMod R$. Since $\Ob \calC < +\infty$ and $\Ob R(i) < +\infty$ for all $i \in \Ob \calC$, so $\Ob Gr(R) < +\infty$, it follows that $G$ is even small. Then we will immediately have the following equivalence
$$
 \rMod R \simeq \rMod \End(G).
$$

Now, let's compute $\End(G)$.

\begin{align*}
     & \End(G) \\
	   = & \Hom\left(\bigoplus_{_ix \in \Ob Gr(R)}\Hom_{Gr(R)}(-,_ix), \bigoplus_{_jy \in \Ob Gr(R)}\Hom_{Gr(R)}(-,_jy)\right)\\
	compa.\ of\ 1st\ argu. \cong & \bigoplus_{_jy \in \Ob Gr(R)} \Hom\left(\bigoplus_{_ix \in \Ob Gr(R)}\Hom_{Gr(R)}(-,_ix), \Hom_{Gr(R)}(-,_jy)\right)\\
     \Ob Gr(R) < +\infty  \cong & \bigoplus_{_jy \in \Ob Gr(R)} \bigoplus_{_ix \in \Ob Gr(R)}  \Hom\left(\Hom_{Gr(R)}(-,_ix), \Hom_{Gr(R)}(-,_jy)\right)\\
     Yoneda \cong & \bigoplus_{_jy \in \Ob Gr(R)} \bigoplus_{_ix \in \Ob Gr(R)}\Hom_{Gr(R)}(_ix, _jy)\\
    \left(  (f_a)_a \mapsto \sum_a f_a \right) \cong & R[\calC].  
\end{align*}

Therefore, we have 
$$
 \rMod R \simeq \rMod \End(G) \simeq \rMod R[\calC].
$$

This completes the proof.
\end{proof}

\begin{example}
Let $k$, $\calC$, $Q$, $R$ as that of Example \ref{example1} with $\Ob \calC < +\infty$, then the Theorem \ref{higherskew} above can be specialised to the well known equivalence between the category of the representations of $Q$ with values in $\rMod k$ and the category of the modules over the path algebra $kQ$.
\end{example}

\section*{Acknowledgments}
I would like to thank my Ph.D. supervisor Prof. Fei Xu \begin{CJK*}{UTF8}{}
\CJKtilde \CJKfamily{gbsn}(徐斐) \end{CJK*} in Shantou University for motivating me to think representation theory higher categorically. I also want to thank my girlfriend Wenwen Sun \begin{CJK*}{UTF8}{} \CJKtilde \CJKfamily{gbsn}($\heartsuit$~孙雯雯~$\heartsuit$) \end{CJK*} for her love. 
\bibliographystyle{plain}
\bibliography{ref}

\begin{thebibliography}{10}

\bibitem{Asa13b}
Hideto Asashiba.
\newblock Gluing derived equivalences together.
\newblock {\em Advances in Mathematics}, 235:134--160, 2013.

\bibitem{BBR24}
Mamta Balodi, Abhishek Banerjee, and Samarpita Ray.
\newblock Categories of modules, comodules and contramodules over representations.
\newblock In {\em Forum Mathematicum}, volume~36, pages 111--152. De Gruyter, 2024.

\bibitem{Ban23}
Abhishek Banerjee.
\newblock Entwined modules over representations of categories.
\newblock {\em Algebras and Representation Theory}, 26(6):3185--3221, 2023.

\bibitem{EV17}
Sergio Estrada and Simone Virili.
\newblock Cartesian modules over representations of small categories.
\newblock {\em Advances in Mathematics}, 310:557--609, 2017.

\bibitem{How81}
Douglas Howe.
\newblock Module categories over topoi.
\newblock {\em Journal of Pure and Applied Algebra}, 21(2):161--165, 1981.

\bibitem{Mit72}
Barry Mitchell.
\newblock Rings with several objects.
\newblock {\em Advances in Mathematics}, 8(1):1--161, 1972.

\bibitem{Mur06B}
Daniel Murfet.
\newblock Linearised categories, 2006.

\bibitem{PSV21}
Carlos~E Parra, Manuel Saor{\'\i}n, and Simone Virili.
\newblock Torsion pairs in categories of modules over a preadditive category.
\newblock {\em Bulletin of the Iranian Mathematical Society}, 47:1135--1171, 2021.

\bibitem{Wu24a}
Mawei Wu.
\newblock A characterization of modules over dg-representations of small categories.
\newblock {\em arXiv preprint arXiv:2409.04442}, 2024.

\bibitem{Wu24}
Mawei Wu.
\newblock Torsion pairs in categories of modules on ringed finite sites.
\newblock {\em arXiv preprint arXiv:2403.15001}, 2024.

\bibitem{WX23}
Mawei Wu and Fei Xu.
\newblock Skew category algebras and modules on ringed finite sites.
\newblock {\em Journal of Algebra}, 631:194--217, 2023.

\end{thebibliography}
\end{document}